\newtheorem{lemma}{Lemma}
\newtheorem{theorem}{Theorem}
\newtheorem{corollary}{Corollary}
\newtheorem{claim}{Claim}
\newcommand{\dss}{\displaystyle\sum}
\newcommand{\lp}{\left (}
\newcommand{\rp}{\right )}
\newcommand{\cF}{\mathcal{F}}
\newcommand{\cG}{\mathcal{G}}
\newcommand{\I}{\mathcal{I}}
\newcommand{\C}{\mathcal{C}}
\newcommand{\CR}{{\rm cr}}
\newcommand{\commentout}[1]{}
\title{Anti-Ramsey number of disjoint rainbow bases in all matroids}
\author{
Linyuan Lu
\thanks{University of South Carolina, Columbia, SC 29208,
({\tt lu@math.sc.edu}). This author is partially supported by NSF grant 2038080.}
\and
Andrew Meier \thanks{University of South Carolina, Columbia, SC 29208,
({\tt am66@mailbox.sc.edu}).} 
}
\begin{document}

\maketitle
\begin{abstract}
Consider a matroid $M=(E,\I)$ with its elements of the ground set $E$ colored.  
A {\em rainbow basis} is a maximum independent set in which each element 
 receives a different color.  The {\em rank} of a subset $S$ of $E$, denoted by $r_M(S)$, is the maximum size of an independent set in $S$. A {\em flat} $F$ is a maximal set in $M$ with a fixed rank.
 The {\em anti-Ramsey} number of $t$ pairwise disjoint rainbow bases in $M$, denoted by $ar(M,t)$, is defined as the maximum number of colors $m$ such that there exists an $m$ coloring of the ground set $E$ of $M$ which contains no $t$ pairwise disjoint rainbow bases.  We determine $ar(M,t)$ for all matroids of rank at least 2: $ar(M,t)=|E|$ if there exists a flat $F_0$ with $|E|-|F_0|<t(r_M(E)-r_M(F_0))$; and 
$ar(M,t)=\max_{F\colon r_M(F)\leq r_M(E)-2} \{|F|+t(r_M(E)-r_M(F)-1)\}$ otherwise. 
This generalizes Lu-Meier-Wang's previous result on the anti-Ramsey number of
edge-disjoint rainbow spanning trees in any multigraph $G$.
\end{abstract}

\section{Introduction}
Anti-Ramsey problems have been investigated for various types of structures. For example, take a multigraph $G$ which is a graph that permits multiple edges between two vertices. Let $V(G)$ and $E(G)$ denote the vertex set and edge set of $G$ respectively. An edge-colored graph $G$ is called \textit{rainbow} if every edge of $G$ receives a different color.
One type of anti-Ramsey problem asks for the maximum number of colors $AR(K_n,\cG)$ in an edge-coloring of $K_n$ containing no rainbow copy of any graph in a class $\cG$. For some earlier results when $\cG$ consists of a single graph, see the survey \cite{FMO10}. In particular, Montellano-Baallesteros and Neumann-Lara \cite{Montella-Nemann05} showed a conjecture of Erd\H{o}s, Simonovits and S\'os \cite{ESS75} by computing $AR(K_n, C_k)$. Jiang and West \cite{Jiang-West04} determined the anti-Ramsey number of the family of trees with $m$ edges. For some more recent results, see for example \cite{FGLX2021, Gorgol2016, GLS2020, Jiang-Pikhurko2009, LSS2019, Lu-Meier-Wang-AntiRamsey, Xie-Yuan2020, Yuan2021+, Yuan-Zhang2021+}.

In particular, anti-Ramsey problems have been investigated for rainbow spanning subgraphs. For example, Hass and Young \cite{Haas-Young12} showed that the anti-Ramsey number for perfect matchings (when $n$ is even) is $\binom{n-3}{2}+2$ for $n\geq 14$. For rainbow spanning trees, let $ar(K_n,t)$ be the maximum number of colors in an edge-coloring of $K_n$ not having $t$ edge-disjoint rainbow spanning trees.  (In the literature, the notation $r(K_n,t)$ was often used for anti-Ramsey number. Here we change it to $ar(K_n,t)$ to avoid the confusion with the rank function $r(\cdot)$.)
Bialostocki and Voxman \cite{Bialostocki-Voxman01} showed that $ar(K_n,1) = \binom{n-2}{2}+1$ for $n \geq 4$. Akbari and Alipour \cite{Akbari-Alipour07} showed that $ar(K_n,2) = \binom{n-2}{2}+2$ for $n\geq 6$. Jahanbekam and West \cite{Jahanbekam-West16} extended the investigations to arbitrary number of edge-disjoint rainbow spanning trees (along with the anti-Ramsey number of some other edge-disjoint rainbow spanning structures such as matchings and cycles). In particular, for $t$ edge-disjoint rainbow spanning trees, they showed that
	 $$ar(K_n,t) = \begin{cases} 
	 \binom{n}{2}-t & \textrm{ for } n = 2t, \\
    \binom{n-2}{2} +t   & \textrm{ for } n >  2t+\sqrt{6t-\frac{23}{4}}+\frac{5}{2},
 		      \end{cases}$$
and they \cite{Jahanbekam-West16} conjectured that $ar(K_n,t) = \binom{n-2}{2} + t$ whenever $n\geq 2t+2 \geq 6$. This conjecture was recently settled by the first author and Wang in \cite{Lu-Wang-AntiRamsey}.
Together with previous results \cite{Bialostocki-Voxman01, Akbari-Alipour07, Jahanbekam-West16}, they give the anti-Ramsey number of $t$ edge-disjoint rainbow spanning trees in $K_n$ for all values of $n$ and $t$. 
\begin{theorem}[Lu-Wang \cite{Lu-Wang-AntiRamsey}]\label{anti-Ramsey}
	For all positive integers $n$ and $t$,
 $$ar(K_n,t) = \begin{cases} 
\binom{n}{2} &   \textrm{ for } n<2t,\\
 t(n-2) & \textrm{ for }2t\leq  n \leq 2t+1,\\
 \binom{n-2}{2}+t   & \textrm{ for } n \geq 2t+2.
 \end{cases}$$
\end{theorem}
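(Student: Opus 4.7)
The proof splits into the three regimes of the statement. The first case, $n<2t$, is immediate: $K_n$ has $\binom{n}{2}<t(n-1)$ edges, so it cannot contain $t$ edge-disjoint spanning trees, rainbow or otherwise; hence any coloring (in particular the fully rainbow one with $\binom{n}{2}$ colors) vacuously avoids $t$ edge-disjoint rainbow spanning trees.

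For the remaining cases I would prove matching lower and upper bounds. For the lower bound when $n\geq 2t+2$, fix two vertices $x,y$; give each of the $\binom{n-2}{2}$ edges in $K_n-\{x,y\}$ its own color, and paint the $2n-3$ remaining edges (those incident to $\{x,y\}$) with $t$ additional colors arranged so that each color class forms a star-like family centered at $x$ or $y$. Since every spanning tree must have positive degree at both $x$ and $y$, each rainbow spanning tree uses at least two crossing edges with distinct colors from these $t$; a careful bookkeeping then shows that $t$ edge-disjoint rainbow spanning trees would over-use the available crossing colors. For $2t\leq n\leq 2t+1$ one can use a coloring whose color classes are near-perfect matchings of size roughly $(n-1)/2$; the combination of the tight edge count and the matching structure prevents the packing, yielding $ar(K_n,t)\geq t(n-2)$.

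The main obstacle is the matching upper bound. For $n\geq 2t+2$ and any edge-coloring with $\binom{n-2}{2}+t+1$ colors, the plan is to select one edge per color to form a \emph{rainbow representative subgraph} $H$, and then show that $H$ already contains $t$ edge-disjoint spanning trees (each of which is rainbow by construction). By the Nash-Williams/Tutte theorem this reduces to arranging that the number of cross-partition edges $e_H(\mathcal{P})$ is at least $t(|\mathcal{P}|-1)$ for every vertex partition $\mathcal{P}$. The difficulty is a single selection of color representatives that dodges every bad partition simultaneously; the natural approach is to induct on $t$, using the $t=1$ result of Bialostocki-Voxman as the base, and to peel off one rainbow spanning tree at a time via an exchange argument that re-routes color representatives across any violated partition.

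I expect the most delicate step to be handling the threshold $n=2t+2$, where the extremal construction barely fails and every color must be used efficiently; there one likely needs a bespoke analysis of how crossing edges and their colors interact with small cuts, and possibly a separate treatment of small cases to seed the induction. The case $2t\leq n\leq 2t+1$ can then be reduced to a tight edge-count argument combined with the Nash-Williams criterion, since both sides of the bound are essentially determined by the number of edges available.
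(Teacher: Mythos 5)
Your plan for the upper bound does not work in the tight range. Choosing one representative edge per color and packing all $t$ spanning trees inside that rainbow subgraph $H$ would produce $t$ trees that are color-disjoint \emph{as a family}, which is strictly stronger than what is claimed ($t$ edge-disjoint trees, each rainbow on its own, with colors allowed to repeat across different trees), and it is impossible at the threshold: with $\binom{n-2}{2}+t+1$ colors the subgraph $H$ has exactly $\binom{n-2}{2}+t+1$ edges, whereas $t$ edge-disjoint spanning trees need $t(n-1)$ edges; at $n=2t+2$ this comparison reads $2t^{2}+1<2t^{2}+t$ for every $t\ge 2$. So no amount of exchange/re-routing of representatives can save the scheme --- the trees must reuse repeated colors across one another, and handling exactly this is the heart of the Lu--Wang argument and of its matroid generalization in this paper: one first places elements of repeated colors into partial independent sets and then asks only for a \emph{color-disjoint extension} (Lemma \ref{colordisjointextension}, combined with Lemma \ref{subindependentlargeenough} and the deletion/minimality argument), rather than restricting attention to one edge per color. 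Nash-Williams/Tutte (equivalently Edmonds' Theorem \ref{existence}) enters only after that reduction.

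The lower-bound constructions are also flawed. Coloring the $2n-3$ crossing edges at $\{x,y\}$ with $t$ ``star-like'' color classes does not forbid $t$ edge-disjoint rainbow spanning trees: rainbowness is only required within each tree, so distinct trees may reuse the same crossing colors, and since there are $2n-3\ge 4t+1$ crossing edges one can typically give each tree two crossing edges of different colors and complete it inside the uniquely colored $K_{n-2}$; your ``over-use the available crossing colors'' count has no force because colors are not consumed globally. The correct extremal coloring (the one Lemma \ref{lowerbound} generalizes) assigns unique colors to only $t-1$ crossing edges and a single common color to \emph{all} remaining crossing edges; then each rainbow spanning tree, needing at least two crossing edges of distinct colors, must use at least one of the $t-1$ uniquely colored crossing edges, and edge-disjointness of the $t$ trees gives the contradiction $t\le t-1$. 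The same issue affects your ``near-perfect matching'' classes for $2t\le n\le 2t+1$, where the right coloring is again $t(n-2)-1$ unique colors plus one common color (taking the partition into singletons). Finally, note that the paper does not reprove this statement: it is quoted from Lu--Wang and recovered here as the special case $G=K_n$ of Theorem \ref{main} via Theorem \ref{mainST}, so the only fair comparison is with that machinery, from which your proposal diverges at precisely the two points above.
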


The authors and Wang \cite{Lu-Meier-Wang-AntiRamsey} showed a general framework that enabled one to determine the anti-Ramsey number $r(G,t)$  for any host multigraph $G$ and all values of $t$. Note that we assume that $G$ is loopless, since otherwise we can always assign each loop a unique distinct color without affecting the existence of rainbow spanning trees. 

Given a graph $H$ and a partition $P$ of the vertex set of $H$, let $\CR(P,H)$ be the set of crossing edges in $H$ whose end vertices belong to different parts in the partition $P$. Let $E(P,H)$ be the set of non-crossing edges of $H$ with respect to $P$, i.e., edges whose two end vertices are contained in the same part in the partition $P$. Let $|P|$ denote the number of parts in $P$. It was shown that

\begin{theorem}[Lu-Meier-Wang \cite{Lu-Meier-Wang-AntiRamsey}]\label{mainST}
For any multigraph $G$, if there is a partition $P_0$ of vertices of $G$ satisfying $|E(G)|-|E(P_0,G)|<t(|P_0|-1)$, then $r(G,t)=|E(G)|$. Otherwise, 
\begin{equation}\label{eq:main1}
ar(G,t)=\max_{P\colon |P| \geq 3} \{|E(P,G)|+t(|P|-2)\},
\end{equation}
where the maximum is taken among all partitions $P$
(with $|P|\geq 3$) of the vertex set of $G$.
\end{theorem}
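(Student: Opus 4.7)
For the trivial case, I would invoke Edmonds' matroid base-packing theorem: if some flat $F_0$ satisfies $|E|-|F_0|<t(r_M(E)-r_M(F_0))$, then $M$ contains no $t$ pairwise disjoint bases whatsoever, so no coloring yields $t$ disjoint rainbow bases, and giving each element its own color shows $ar(M,t)=|E|$. The rest of the work is to handle the generic case (where $|E|-|F|\geq t(r_M(E)-r_M(F))$ for every flat $F$) with a direct lower-bound construction and an upper-bound argument parallel to the graph case of \cite{Lu-Meier-Wang-AntiRamsey}.

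For the lower bound in the generic case, I would fix a flat $F^*$ attaining the claimed maximum, set $k=r_M(E)-r_M(F^*)\geq 2$, color $F^*$ with $|F^*|$ distinct fresh colors, pick any $S\subseteq E\setminus F^*$ with $|S|=|E\setminus F^*|-t(k-1)+1$, color $S$ monochromatically with one further color, and give each of the remaining $t(k-1)-1$ elements of $E\setminus F^*$ its own distinct color---a total of $|F^*|+t(k-1)$ colors. The generic-case hypothesis forces $|E\setminus F^*|\geq tk$, so $|S|\geq t+1$. If $B_1,\ldots,B_t$ were $t$ disjoint rainbow bases, then $\sum_i|B_i\cap F^*|\leq tr_M(F^*)$ (each $B_i\cap F^*$ is $M$-independent with elements in $F^*$), hence $\sum_i|B_i\setminus F^*|\geq tk$; but the rainbow condition bounds the heavy color's use across the $t$ bases by at most $t$ and each singleton color's use by at most $1$, giving $|\bigcup_i(B_i\setminus F^*)|\leq t+(t(k-1)-1)=tk-1$, a contradiction.

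For the upper bound, suppose $\chi$ is an $m$-coloring of $E$ with no $t$ disjoint rainbow bases. Choose a system of distinct representatives $R\subseteq E$, one per color, so $|R|=m$. If $R$ contains $t$ pairwise disjoint bases of $M$, those bases are automatically rainbow (each color appears exactly once in $R$), contradicting the hypothesis; otherwise Edmonds' matroid union theorem applied inside $R$ yields some $T\subseteq R$ with $|R\setminus T|<t(r_M(E)-r_M(T))$, and replacing $T$ by its closure $F$ in $M$ gives $m\leq|F|+t(r_M(E)-r_M(F))-1$ via $|R\cap F|\leq|F|$ and $|R\setminus F|\leq|R\setminus T|$. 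The delicate step is to close the residual $t-1$ gap and force $r_M(F)\leq r_M(E)-2$; I would do this by re-choosing representatives inside over-represented color classes so that the witnessing flat descends in rank, handling the hyperplane case by using the cocircuit $E\setminus F$ (of size at least $t$ by the generic-case hypothesis) to swap representatives and either expose $t$ disjoint rainbow bases or relocate the obstruction to a smaller-rank flat.

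The hard part will be this rainbow augmentation. A direct matroid intersection between the union matroid $tM$ and the color partition matroid of capacity $t$ is provably insufficient---the graphic matroid of $K_4$ with its three perfect matchings as color classes admits a common basis of full size yet no two edge-disjoint rainbow spanning trees exist---so a rainbow-specific exchange on representatives is essential, and it is what ultimately yields the $-1$ inside $t(r_M(E)-r_M(F)-1)$ and the rank restriction on $F$.
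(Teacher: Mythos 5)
Your degenerate case and your lower-bound construction are correct and essentially coincide with the paper's Corollary \ref{degeneratecase} and Lemma \ref{lowerbound}. The genuine gap is the upper bound, which is the whole difficulty of the theorem. The SDR/Edmonds step you actually carry out only yields $m\leq |F|+t(r_M(E)-r_M(F))-1$ with no restriction on $r_M(F)$: this is short of the target $|F|+t(r_M(E)-r_M(F)-1)$ by $t-1$ and permits $F$ to be a hyperplane, and the step you defer (``re-choosing representatives so the witnessing flat descends in rank,'' ``swap representatives and either expose $t$ disjoint rainbow bases or relocate the obstruction'') is a hope, not an argument. Worse, the representative route is structurally incapable of closing the gap: any $t$ disjoint bases found inside a system of distinct representatives are automatically \emph{color-disjoint}, whereas $t$ pairwise disjoint rainbow bases need not be. Concretely, for $G=K_4$ and $t=2$ the theorem asserts that every $5$-coloring of the six edges admits two edge-disjoint rainbow spanning trees, yet two color-disjoint spanning trees would require six distinct colors; so whenever $m<t\cdot r_M(E)$ --- which includes the tight regime --- no globally rainbow set $R$ can contain $t$ disjoint bases, and your scheme can never terminate in success there. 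Your $K_4$/perfect-matchings example correctly shows that the capacity-$t$ intersection certificate is too weak, but the SDR certificate is too strong, and the interpolation you gesture at is exactly the missing content.

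For comparison, the paper derives Theorem \ref{mainST} by specializing Theorem \ref{main} through the dictionary between flats of the cycle matroid and vertex partitions (Section \ref{Graphic}, where $|P|-2=r_M(E)-r_M(F)-1$), and the upper bound of Theorem \ref{main} is proved quite differently: one pre-places $t$ pairwise disjoint rainbow independent sets $I_1,\ldots,I_t$ that absorb the repeated colors with multiplicity (the same color may be used in several distinct $I_i$, which is precisely how the color-disjoint trap is avoided), characterizes when such a family extends color-disjointly to $t$ pairwise disjoint rainbow bases via matroid union plus intersection (Lemma \ref{colordisjointextension}), gains exactly the missing $t-1$ through the counting condition $\sum_i|I_i|\geq t-1+|c(\cup_i I_i)|$ (Corollary \ref{interiorcolors} and Lemma \ref{subindependentlargeenough}), and closes the argument with a minimal-counterexample induction that deletes an element of a repeated color while preserving the (tPDB) condition (Claims \ref{2cycles}--\ref{edgeremoval}). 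Nothing playing the role of this machinery appears in your proposal, so as it stands the upper bound --- and hence the theorem --- is unproven.
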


The fundamental structure underpinning the ideas involved in many anti-Ramsey type problems, especially those of spanning substructures, is that of a matroid. In this paper, we generalize the main result of Lu-Meier-Wang in \cite{Lu-Meier-Wang-AntiRamsey} to the framework of matroids. Given a matroid $M=(E,\I)$, where $E$ is the (colored) ground set and $\I$ is the family of independent sets.  A {\em rainbow basis} is a maximum independent set in which each element  receives a different color.  The {\em rank} of a set $S\subseteq E$, written by $r_M(S)$,  is the maximum size of an independent set in $S$. A {\em flat} $F$ is a maximal set in $M$ with a fixed rank. The \textit{anti-Ramsey} number of $t$ pairwise disjoint rainbow bases in $M$, denoted by $ar(M,t)$, is defined as the maximum number of colors $m$ such that there exists an $m$ coloring of the ground set $E$ of $M$ which contains no $t$ pairwise disjoint rainbow bases. Please see the detailed definitions of these terminologies in Section \ref{Notation}. Our main result is

\begin{theorem}\label{main}
For any matroid $M=(E,\I)$ with rank at least 2, if there is a flat $F$ of $M$ satisfying $|E|-|F|<t(r_M(E)-r_M(F))$, then $ar(M,t)=|E|$. Otherwise, 
\begin{equation}\label{eq:main}
ar(M,t)=\max_{F\colon r_M(F) \leq r_M(E)- 2} \{|F|+t(r_M(E)-r_M(F)-1)\},
\end{equation}
where the maximum is taken among all flats $F$ of $M$
with $r_M(F)\leq r_M(E)-2$.
\end{theorem}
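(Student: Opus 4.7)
The plan is to follow the strategy of Lu-Meier-Wang's multigraph result (Theorem~\ref{mainST}) in the matroid setting, with Nash-Williams' spanning-tree packing theorem replaced throughout by Edmonds' matroid union theorem. Recall that Edmonds' theorem states that $M$ has $t$ pairwise disjoint bases iff $|E\setminus A|\geq t(r_M(E)-r_M(A))$ for every $A\subseteq E$, and the quantification can be restricted to flats since $|A|$ is maximized at a given rank precisely on flats. The first case of Theorem~\ref{main} is exactly the failure of this inequality for some flat $F_0$, in which case $M$ has no $t$ disjoint bases whatsoever, so coloring $E$ with $|E|$ distinct colors immediately gives $ar(M,t)=|E|$.

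For the second case I would establish \eqref{eq:main} in both directions. For the lower bound, fix any flat $F$ with $r:=r_M(F)\leq r_M(E)-2$ and set $d:=r_M(E)-r\geq 2$; color the $|F|$ elements of $F$ with distinct colors, and color $E\setminus F$ with $t(d-1)$ fresh colors chosen so that the contraction $M/F$ (of rank $d$) admits no $t$ disjoint rainbow bases. One concrete construction picks $t$ disjoint bases $C_1,\ldots,C_t$ of $M/F$ (which exist by the second-case hypothesis and Edmonds), assigns each $C_i$ only $d-1$ new colors (repeating one color within $C_i$), and colors any leftover elements of $E\setminus F$ with already-used colors. Since the palettes on $F$ and on $E\setminus F$ are disjoint, every rainbow basis $B$ of $M$ contributes $|B\setminus F|\geq d$ distinct fresh colors on its portion outside $F$, so $t$ disjoint rainbow bases of $M$ would project to $t$ disjoint rainbow bases of $M/F$ --- precluded by construction.

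For the upper bound, I must show that any coloring of $E$ with $m:=\max_{F}\{|F|+t(r_M(E)-r_M(F)-1)\}+1$ colors contains $t$ disjoint rainbow bases. My plan is an induction on $|E|$ combined with an exchange argument: take a maximum collection $B_1,\ldots,B_s$ of pairwise disjoint rainbow bases with $s<t$ and attempt to extend it by basis exchange, using that $M$ has $t$ disjoint bases by the second-case hypothesis. When the exchange fails for every candidate element, the elements resisting exchange lie in a distinguished flat $F^*$ (the $M$-closure of those elements that cannot be added without creating a color repetition in some $B_i$), and counting the colors absorbed by $F^*$ against the $t(r_M(E)-r_M(F^*)-1)$ remaining ``exchange slots'' forces the inequality $|F^*|+t(r_M(E)-r_M(F^*)-1)\geq m$, contradicting the definition of $m$.

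The main obstacle is this exchange argument. The delicate part is pinpointing $F^*$ and simultaneously verifying the rank identity $r_M(F^*\cup\{f\})=r_M(F^*)$ for every exchange-forbidden element $f$ together with the color-count inequality above. Porting the cycle-cocircuit analysis of \cite{Lu-Meier-Wang-AntiRamsey} into the circuit-cocircuit language of general matroids requires the strong basis exchange axiom and careful use of the submodularity of matroid rank, and I expect handling color classes that span a hyperplane of $M$ --- where the obstruction can take a degenerate form requiring secondary case analysis --- to be the most intricate part.
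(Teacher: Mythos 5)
There are two genuine gaps. First, your lower-bound construction is not correct as stated. Reducing to the contraction $M/F$ is fine, but you still owe a coloring of $E\setminus F$ with exactly $t(d-1)$ colors admitting no $t$ disjoint rainbow bases of $M/F$, and your ``concrete construction'' does not provide one: coloring each of the chosen disjoint bases $C_1,\dots,C_t$ of $M/F$ with $d-1$ colors (one repeated inside $C_i$) does not prevent rainbow bases that mix elements from different $C_i$'s or use leftover elements. For example, with $M/F\cong U_{2,8}$, $t=2$, $C_1=\{e_1,e_2\}$ both of color $1$ and $C_2=\{e_3,e_4\}$ both of color $2$, the pairs $\{e_1,e_3\}$ and $\{e_2,e_4\}$ are two disjoint rainbow bases, so the construction fails; note that $t$ disjoint rainbow bases need only be element-disjoint, not color-disjoint, so a bare count of $t(d-1)$ colors precludes nothing. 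The paper's construction (Lemma~\ref{lowerbound}) is different and does work: give $F$ distinct colors, give $t(d-1)-1$ elements of $E\setminus F$ distinct new colors, and give \emph{all} remaining elements of $E\setminus F$ one further color $c_s$; then each of $t$ disjoint rainbow bases uses at most one $c_s$-element and hence at least $d-1$ non-$c_s$ elements outside $F$, totaling $t(d-1)$, which exceeds the $t(d-1)-1$ available.

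Second, and more seriously, your upper bound is only a plan, and the step you yourself flag as the obstacle is exactly where the content of the theorem lies. ``Take a maximum family of $s<t$ disjoint rainbow bases and extend by basis exchange'' is not a local operation: the $(s+1)$-st basis must be built from scratch, disjoint from the others and rainbow, and it is not clear how the elements resisting exchange form a flat $F^*$, nor why a color count on $F^*$ would violate the definition of $m$. The paper does not argue this way. Its proof takes a minimal counterexample and deletes a suitably chosen element of repeated color (Claim~\ref{edgeremoval}, using submodularity to show there is at most one ``tight'' hyperplane); the heart of the argument is Claim~\ref{BaseCase}, whose proof constructs $t$ disjoint rainbow independent sets out of the repeated-color elements (via a swapping argument controlled by Claim~\ref{2cycles}, which bounds $|cl_2(x)|\le t$ in a minimal counterexample) and then extends them using Lemma~\ref{colordisjointextension}, a color-disjoint extension criterion obtained by combining the matroid union and matroid intersection theorems, together with the color-counting estimates of Corollary~\ref{interiorcolors} and Lemma~\ref{subindependentlargeenough}. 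Nothing in your sketch substitutes for this extension machinery; without it (or Schrijver's color-disjoint basis theorem suitably generalized to extensions of prescribed independent sets), the ``counting colors absorbed by $F^*$'' step has no route to a contradiction. As it stands, the proposal establishes only the degenerate case, and both the lower and upper bounds of \eqref{eq:main} remain unproved.
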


We also include the elementary result in the case where the rank of $M$ is 1. Given a matroid $M$ define $M_0$ as all rank 0 elements.

\begin{theorem}\label{rank1}
For any matroid $M=(E,\I)$ of rank 1,
we have
$$
ar(M,t)=\begin{cases}
|E| & \mbox{ if } |E|<|M_0|+t,\\
|M_0| +t -1 &\mbox{ otherwise.}
\end{cases}
$$
\end{theorem}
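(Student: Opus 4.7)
The plan is to handle each branch of the formula by a direct counting argument, using the fact that in a rank-$1$ matroid every basis is a singleton consisting of a non-loop element of $E$.

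Suppose first that $|E|<|M_0|+t$, equivalently $|E\setminus M_0|<t$. Then $M$ has fewer than $t$ non-loop elements, so no $t$ pairwise disjoint bases exist in $M$ at all; every coloring of $E$ therefore vacuously contains no $t$ pairwise disjoint rainbow bases, and $ar(M,t)=|E|$.

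Now suppose $|E|\geq |M_0|+t$, and I prove matching lower and upper bounds on $ar(M,t)$. For the lower bound $ar(M,t)\geq |M_0|+t-1$, I would exhibit the coloring that assigns each of the $|M_0|$ loops its own unique color and colors the $|E|-|M_0|\geq t$ non-loop elements using exactly $t-1$ further colors, each appearing at least once; this uses $|M_0|+(t-1)$ colors in total. Any $t$ pairwise disjoint bases correspond to $t$ distinct non-loop elements, and since only $t-1$ colors appear on non-loops, the pigeonhole principle forces two of these $t$ chosen non-loops to share a color, which obstructs the $t$ singletons from forming $t$ pairwise disjoint rainbow bases. For the matching upper bound, consider any coloring using $m\geq |M_0|+t$ colors; since the loops can account for at most $|M_0|$ distinct colors, at least $m-|M_0|\geq t$ distinct colors must appear on non-loop elements, and choosing one non-loop from each of $t$ such color classes produces $t$ pairwise disjoint rainbow bases, yielding $ar(M,t)\leq |M_0|+t-1$.

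The proof is essentially a pigeonhole computation and presents no serious technical obstacle. The only subtlety is recognizing that in the degenerate rank-$1$ setting each individual singleton basis is trivially rainbow, so the operative content of the rainbow condition is concentrated on the $t$ disjoint singletons collectively: the lower-bound construction blocks them from exhibiting $t$ distinct colors, while the upper-bound argument uses the color abundance on non-loops to produce them.
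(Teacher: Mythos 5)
Your proof is correct and is essentially the same argument the paper intends: the paper offers no written proof of Theorem \ref{rank1} beyond the remark that it is a straightforward application of the pigeonhole principle, and your two-sided counting (unique colors on $M_0$ plus $t-1$ colors on the non-loops for the lower bound, and $t$ color classes on non-loops for the upper bound) is exactly that computation. Your closing caveat is also the key point: since each singleton basis is vacuously rainbow, the value $|M_0|+t-1$ is only meaningful if the $t$ disjoint singleton bases are required to carry $t$ distinct colors (i.e., to be color-disjoint), which is the reading your lower-bound coloring uses and the convention the paper implicitly adopts in this rank-$1$ statement.
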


\noindent
The above result is a straightforward application of the pigeonhole principle. 

When $r_M(E)=2$, $r_M(F)=0$ implies $F\subseteq M_0$, while $r_M(F)=1$ implies $F\subseteq cl(x)$, the closure of some rank 1 element $x$. (See the definitions in Section \ref{Notation}.)
Theorem \ref{main} implies the following corollary.
\begin{corollary}\label{rank2}
For any matroid $M=(E,\I)$ of rank 2, 
we have
$$
ar(M,t)=\begin{cases}
|E| & \mbox{ if } |E|<|M_0|+2t \mbox{ or } |E|< |cl(x)|+t \mbox{ for some } x.\\
|M_0| +t &\mbox{ otherwise.}
\end{cases}
$$
\end{corollary}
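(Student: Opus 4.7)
The plan is to read Corollary~\ref{rank2} directly off Theorem~\ref{main} by enumerating the flats of a rank-2 matroid, so the only real work is a short case analysis.

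First I would identify the flats. In a matroid $M$ of rank~$2$, the flats are exactly: the unique rank-$0$ flat $cl(\emptyset)=M_0$ (the set of loops); the rank-$1$ flats, each of the form $cl(x)$ for some non-loop element $x\in E$; and the rank-$2$ flat $E$ itself. In particular, any flat $F$ with $r_M(F)\leq r_M(E)-2=0$ must equal $M_0$, so the maximum in \eqref{eq:main} is taken over a single flat.

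Next I would process the first alternative of Theorem~\ref{main}. The hypothesis $|E|-|F|<t(r_M(E)-r_M(F))$ becomes $0<0$ for $F=E$ (vacuous), $|E|<|M_0|+2t$ for $F=M_0$, and $|E|<|cl(x)|+t$ for $F=cl(x)$. The disjunction over the non-trivial choices of $F$ is precisely the hypothesis of the first case of Corollary~\ref{rank2}, and in that case Theorem~\ref{main} immediately yields $ar(M,t)=|E|$.

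In the complementary situation, Theorem~\ref{main} gives
\[
ar(M,t)=\max_{F:\, r_M(F)\leq 0}\{|F|+t(r_M(E)-r_M(F)-1)\}.
\]
Since $M_0$ is the only admissible flat and $r_M(E)=2$, $r_M(M_0)=0$, the maximum evaluates to $|M_0|+t(2-0-1)=|M_0|+t$, matching the second case of the corollary. There is essentially no obstacle here: the proof is a line-by-line specialization once one recalls that $cl(\emptyset)=M_0$ is the unique rank-$0$ flat and that in rank~$2$ every other proper non-empty flat is a parallel class closure $cl(x)$.
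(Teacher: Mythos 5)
Your specialization is correct and matches the paper's (largely implicit) reasoning: the paper derives the corollary from Theorem \ref{main} via exactly the observation you make, namely that in a rank-$2$ matroid the flats of rank $0$ and $1$ are $M_0=cl(\emptyset)$ and the closures $cl(x)$ of rank-$1$ elements, so the degenerate condition becomes $|E|<|M_0|+2t$ or $|E|<|cl(x)|+t$, and otherwise the maximum in \eqref{eq:main} is over the single flat $M_0$, giving $|M_0|+t$. No gaps; this is the same line-by-line specialization the authors intend.
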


The paper is organized as follows. In Section \ref{Notation}, we provide the machinery needed to prove Theorem \ref{main}. We turn to the proof of Theorem \ref{main} in Section \ref{MainProof}, and in Section \ref{applications} we show Theorem \ref{mainST} is a direct consequence of Theorem \ref{main} and also deduce the anti-Ramsey number of some nice matroids.

\section{Notation and tools}\label{Notation}

A matroid $M=(E,\I)$ is a set $E$ called a ground set and a collection of subsets $\I\subseteq 2^{E}$ called independent sets satisfying, 
\begin{enumerate}
    \item[(I1)] if $A\in \I$ and $A'\subseteq A$ then $A'\in\I$, and
    \item[(I2)] if $A\in \I$ and $B\in \I$ with $|A|<|B|$ then there exists a $b\in B\setminus A$ such that $A\cup \{b\}\in \I$. 
\end{enumerate}

Any matroid discussed in this paper will be assumed to have a finite ground set. Any set which is not independent is \emph{dependent}. A minimum dependent set is called a \emph{circuit}, which satisfies
\begin{enumerate}
    \item [(C1)] No circuit is contained in another circuit.
    \item [(C2)] For any two distinct circuits $C_1$ and $C_2$ and $x\in C_1 \cap C_2$, then $C_1\cup C_2\setminus \{x\}$ contains a circuit.
\end{enumerate}

An independent set with maximal size is called a \emph{basis}, which satisfies
\begin{enumerate}
    \item[(B1)] No basis is contained in any other basis.  
    \item[(B2)] If $B_1$ and $B_2$ are distinct bases, then for every $x\in B_1\setminus B_2$, there is $y\in B_2\setminus B_1$ such that
     $B_1\setminus \{x\} \cup \{y\}$ is a basis. 
\end{enumerate}

From (B1) and (B2) one can easily derive that any two bases of a matroid have the same size. This number is called the \emph{rank} of $M$. The \emph{rank function} $r_M:2^{E}\rightarrow \mathbb{N}$ is defined as 
$$r_M(S)=\max\{|I|:I\in\I \text{ and } I\subseteq S\}.$$
When $M$ is clear under context, we may omit the subscript and write
the rank function as $r(\cdot)$.
The rank function $r_M(\cdot)$ satisfies
\begin{enumerate}
    \item[(R1)] For any set S, $0\leq r_M(S)\leq |S|$. 
    \item[(R2)] If $S\subseteq T$, then $r_M(S)\leq r_M(T) $.
    \item[(R3)] If $S,T\subseteq E$, then the following {\rm semimodular inequality} holds:
    \begin{equation}
        \label{rank_submodular}
        r_M(S)+r_M(T)\geq r_M(S\cup T) +r_M(S\cap T).
    \end{equation}
\end{enumerate}

The {\em closure} of $S$ is the set $cl(S)=\{x\colon r(S\cup \{x\})=r(S)\}$. It satisfies the following properties
\begin{enumerate}
    \item [(S1)] For any set $S$, $S\subseteq cl(S)$.
    \item [(S2)] For any set $S$, $cl(cl(S))=cl(S)$.
    \item [(S3)] If $S\subset T$, then $cl(S)\subseteq cl(T)$.
    \item [(S4)] If $x\not \in cl(S)$ but $x\in cl(S\cup \{y\})$, then $y\in cl(S\cup \{x\}).$
\end{enumerate}
When $S=\{x\}$, we write $cl(x)$ instead of $cl(\{x\})$, for short. A set $F$ is called a {\em flat} of $M$ if $cl(F)=F$.  The \emph{flats} of $M$ are the maximal sets for its rank.  

We will extensively use these properties in the paper. It is known that these properties of circuits, bases, rank, and closure can be used as alternative definitions of a matroid (see Whitney \cite{Whitney1935}, Wilson \cite{Wilson1973}, and Oxley\cite{Oxley1992}.)

Given $t$ matroids, $\{M_i=(E_i, \I_i)\colon i=1,2,\ldots t\}$,  the \emph{union matroid}, written by $M_1\vee M_2\vee \cdots M_t$, has a ground set
$\cup_{i=1}^t E_i$ and independent sets $\I=\{I_1\cup I_2\cup \cdots \cup I_t : I_i\in \I \text{ for all } i \in [t]\}.$ 
The celebrated matroid union theorem (see Nash-Williams \cite{Nash-Williams67}, Edmonds \cite{Edmonds70}) states
$$r_{\vee_{i=1}^t M_i}(S)=\min_{T\subseteq S}\{ |S\setminus T|+ \sum_{i=1}^t r_{M_i}(T\cap E_i)\}.$$
Let $M^t=M\vee M\vee\cdots \vee M$ be the $t$\emph{-fold union} of $M$. 
The above can be reformulated as
$$r_{M^t}(E)=\min_{F\subseteq E, F \text{ is a flat}}\{|E\setminus F|+t\cdot r_M(F)\}.$$

Given two matroids $M_1=(E,\I_1)$ and $M_2=(E, I_2)$ on the same ground set $E$, 
the matroid intersection theorem states that 
$$\max_{I\in \I_1\cap I_2}|I|=\min_{U\subseteq E}\{r_{M_1}(U)+r_{M_2}(E\setminus U)\}.$$
The matroid intersection theorem can be derived from the matroid union theorem, and vice versa (see Edmonds \cite{Edmonds70}).

The following result is due to Edmonds (see \cite{Edmonds65}), which gives a useful existence condition for $t$ pairwise disjoint bases in a matroid:

\begin{theorem}[Edmonds \cite{Edmonds65}] \label{existence}
For any matroid $M=(E,\I)$, $M$ contains $t$ pairwise disjoint bases if and only if every flat $F$ of $M$ satisfies $|E|-|F|\geq t(r_M(E)-r_M(F))$.
\end{theorem}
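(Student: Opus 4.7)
The plan is to derive Theorem \ref{existence} as a direct consequence of the matroid union formula
\[
r_{M^t}(E)=\min_{F\subseteq E,\ F\text{ a flat}}\{|E\setminus F|+t\cdot r_M(F)\}
\]
recorded earlier in this section. First I would translate the combinatorial statement into a rank statement: every independent set of $M^t$ arises as a union $I_1\cup\cdots\cup I_t$ with each $I_j\in\I$, so $r_{M^t}(E)\le t\cdot r_M(E)$ automatically, and equality forces each $|I_j|=r_M(E)$ and hence each $I_j$ to be a basis of $M$ (with the $I_j$ necessarily pairwise disjoint, because the size of their union equals the sum of their individual sizes). Thus $M$ admits $t$ pairwise disjoint bases if and only if $r_{M^t}(E)=t\cdot r_M(E)$.

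Next I would feed this equivalence into the union formula. Taking $F=E$ on the right-hand side gives $|E\setminus E|+t\cdot r_M(E)=t\cdot r_M(E)$, so the minimum is always at most $t\cdot r_M(E)$. Therefore $r_{M^t}(E)=t\cdot r_M(E)$ holds precisely when every flat $F$ satisfies
\[
|E\setminus F|+t\cdot r_M(F)\ge t\cdot r_M(E),
\]
which rearranges to the inequality $|E|-|F|\ge t(r_M(E)-r_M(F))$ in the theorem statement. Combined with the previous paragraph, this is exactly the claimed equivalence.

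For completeness I would verify that restricting the minimum to flats is without loss of generality: for any $S\subseteq E$, the closure $cl(S)$ satisfies $r_M(cl(S))=r_M(S)$ by definition of $cl$, while $|cl(S)|\ge|S|$ by (S1), so
\[
|E\setminus cl(S)|+t\cdot r_M(cl(S))\le |E\setminus S|+t\cdot r_M(S),
\]
and the infimum over arbitrary subsets coincides with the infimum over flats. There is essentially no hard step here; the proof is an algebraic rearrangement once the matroid union theorem is in hand. The only place requiring a moment of care is the translation between ``independent set of $M^t$ of maximum total size'' and ``disjoint bases of $M$,'' which is handled by the size-additivity argument above.
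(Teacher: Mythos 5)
Your proposal is correct and follows essentially the same route as the paper's own argument: both deduce the theorem from the matroid union formula $r_{M^t}(E)=\min_{F\text{ flat}}\{|E\setminus F|+t\cdot r_M(F)\}$ together with the observation that $t$ pairwise disjoint bases exist precisely when $r_{M^t}(E)=t\cdot r_M(E)$. Your added remarks (the size-additivity argument and the reduction of the minimum to flats via closures) simply make explicit steps the paper leaves implicit.
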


\begin{corollary}\label{degeneratecase}
Let $M=(E,\I)$ be a matroid. If there exists a flat $F$ of $M$ such that $|E|-|F|< t(r_M(E)-r_M(F))$, then $ar(M,t)=|E|$.
\end{corollary}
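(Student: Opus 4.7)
The plan is to derive this as an immediate consequence of Edmonds' characterization (Theorem \ref{existence}) combined with the trivial observation that a coloring using all $|E|$ colors makes every subset rainbow. First, I would note that the hypothesis $|E|-|F|<t(r_M(E)-r_M(F))$ for some flat $F$ is exactly the negation of the condition in Theorem \ref{existence}, so $M$ does \emph{not} contain $t$ pairwise disjoint bases.

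Next, I would exhibit an explicit coloring witnessing $ar(M,t)\geq |E|$. Assign every element of $E$ its own unique color, so the coloring uses exactly $|E|$ colors. Under this rainbow coloring, every subset of $E$ (and in particular every basis) is automatically rainbow. Therefore, the existence of $t$ pairwise disjoint rainbow bases under this coloring is equivalent to the existence of $t$ pairwise disjoint bases, which we just ruled out. Hence this coloring with $|E|$ colors contains no $t$ pairwise disjoint rainbow bases, giving $ar(M,t)\geq |E|$.

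For the reverse inequality, I would just observe that any coloring of $E$ uses at most $|E|$ colors, so $ar(M,t)\leq |E|$ holds trivially by the definition of $ar(M,t)$. Combining the two inequalities gives $ar(M,t)=|E|$, as desired.

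There is essentially no obstacle here: the entire content is packaged inside Theorem \ref{existence}. The only thing to be careful about is the equivalence between ``$t$ disjoint rainbow bases exist under the rainbow coloring'' and ``$t$ disjoint bases exist,'' which is immediate from the fact that distinct elements carry distinct colors.
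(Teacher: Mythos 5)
Your proof is correct and follows essentially the same route as the paper: invoke Theorem \ref{existence} to conclude $M$ has no $t$ pairwise disjoint bases, then color all elements with distinct colors to get the lower bound, with the upper bound $ar(M,t)\leq |E|$ being trivial.
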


\begin{proof}
By Theorem \ref{existence}, $M$ cannot contain $t$ pairwise disjoint bases, and thus no coloring of the ground set can contain $t$ pairwise disjoint rainbow bases. As such, we can color every element of the ground set a unique color and so $ar(M,t)=|E|$. 
\end{proof}

Given a matroid $M=(E,\I)$ and a coloring of its ground set, we say a set of bases is \emph{color-disjoint} if no color among the bases is used more than once. Given a subset $S\subseteq E$, we use $c(S)$ to denote the set of colors which appear in $S$. When $S=\{x\}$ for some element $x\in E$, we write $c(x)$ as a shorthand. We will need the following existence result for $t$ color-disjoint bases due to Schrijver
(see \cite{Schrijver03} and \cite{Lu-Wang-AntiRamsey}).  


\begin{theorem} 
\label{colordisjointbases}
Let $M=(E,\I)$ be a matroid equipped with a coloring of its ground set $E$. $M$ contains $t$ color-disjoint bases if and only if every flat $F$ of $M$ satisfies 
$$|c(E\setminus F)|\geq t(r_M(E)-r_M(F)).$$
\end{theorem}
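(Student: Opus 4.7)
The plan is to prove both directions separately: necessity by a short counting argument, and sufficiency by a matroid-intersection argument with an auxiliary partition matroid that encodes the rainbow condition.

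For necessity, suppose $B_1,\dots,B_t$ are color-disjoint bases and let $F$ be any flat. Since $B_i\cap F$ is independent in $M$, we have $|B_i\cap F|\leq r_M(F)$ and hence $|B_i\cap(E\setminus F)|\geq r_M(E)-r_M(F)$. Because no color is repeated across the family, the colors appearing on $\bigcup_i(B_i\cap(E\setminus F))$ are all distinct, giving $|c(E\setminus F)|\geq t(r_M(E)-r_M(F))$.

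For sufficiency, I would introduce the partition matroid $N$ on $E$ whose blocks are the color classes; its independent sets are precisely the rainbow subsets of $E$, and $r_N(S)=|c(S)|$ for every $S\subseteq E$. Any common independent set of $M^t$ and $N$ of size $t\cdot r_M(E)$ must decompose as a disjoint union of $t$ bases of $M$ (since each summand in $\sum_i|B_i|\leq t\cdot r_M(E)$ is forced to equal $r_M(E)$), and being globally rainbow is precisely the color-disjointness of that family. By the matroid intersection theorem applied to $M^t$ and $N$, it therefore suffices to prove
$$r_{M^t}(U)+|c(E\setminus U)|\geq t\cdot r_M(E)\qquad\text{for every }U\subseteq E.$$

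To verify this, I would expand $r_{M^t}(U)=\min_{T\subseteq U}\{|U\setminus T|+t\cdot r_M(T)\}$ via the matroid union theorem, fix any $T\subseteq U$, and pass to the flat $F=cl(T)$, which satisfies $r_M(F)=r_M(T)$ and $T\subseteq F$. The inclusion $E\setminus F\subseteq(E\setminus U)\cup(U\setminus F)$ together with $U\setminus F\subseteq U\setminus T$ gives $|c(E\setminus F)|\leq|c(E\setminus U)|+|U\setminus T|$, and combining this with the flat hypothesis $|c(E\setminus F)|\geq t(r_M(E)-r_M(F))$ yields the required inequality after a short rearrangement. The main obstacle I expect is the bookkeeping that translates the flat hypothesis (stated in terms of $F$) into the matroid-intersection bound (stated in terms of $U$ and $T$); the closure step $T\mapsto cl(T)$ is the crux that lets the flat hypothesis be invoked at all.
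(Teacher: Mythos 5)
Your proof is correct, and for the harder (sufficiency) direction it runs on the same engine as the paper's argument: apply the matroid intersection theorem to the $t$-fold union $M^t$ and the partition matroid of rainbow sets, then convert the union-theorem minimizer into a flat. The differences are in the details, and both are improvements in economy. For necessity, the paper also routes through the union and intersection theorems, whereas your counting argument (each color-disjoint basis meets $E\setminus F$ in at least $r_M(E)-r_M(F)$ elements, and all these elements carry distinct colors) is elementary and avoids the heavy machinery entirely. For sufficiency, the paper first argues by a minimality/exchange step that one may take $T=U$ in the bound $r_{M^t}(U)=\min_{T\subseteq U}\{|U\setminus T|+t\,r_M(T)\}$ and then enlarges to a flat; you instead fix an arbitrary $T\subseteq U$, pass directly to $F=cl(T)$, and use the inclusion $E\setminus F\subseteq (E\setminus U)\cup(U\setminus T)$ to get $|c(E\setminus F)|\le |c(E\setminus U)|+|U\setminus T|$, which combined with the flat hypothesis gives $|U\setminus T|+t\,r_M(T)+|c(E\setminus U)|\ge t\,r_M(E)$ for every $T$, hence the intersection bound. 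This closure step is exactly the crux you identified, and it replaces the paper's two-stage reduction with a single clean inequality; your decomposition of a size-$t\,r_M(E)$ common independent set into $t$ disjoint bases is also justified correctly. In short: same key tools for sufficiency, a genuinely simpler necessity direction, and a tidier bookkeeping of the $U,T\mapsto F$ translation.
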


From this point on, we refer to the condition that every flat $F$ of $M$ satisfies $|E|-|F|\geq t(r_M(E)-r_M(F))$ as the $t$ pairwise disjoint bases (tPDB) condition. Furthermore, where there is no room for confusion, we will not specify that $F$ is a flat. 
Given a family of $t$ pairwise disjoint rainbow independent sets $\cF=\{I_1,I_2,\cdots I_t\}$ we say that $\cF$ has a color-disjoint extension to $t$ pairwise disjoint rainbow bases if elements can be added to each $I_i$ so that each $I_i$ becomes a basis $\mathbf{I}_i$ and if $c$ is a color in $\mathbf{I}_i\setminus I$ then $c$ does not appear in $I_i$ and any other $\mathbf{I}_j$, $i\neq j$. The following lemma generalizes Theorem \ref{colordisjointbases} (since we can set all $I_i$'s to be the empty set).

\begin{lemma}\label{colordisjointextension}
Let $M=(E,\I)$ be a matroid with a coloring of $E$ and let $\cF=\{I_1,I_2,\cdots I_t\}$ be a family of $t$ pairwise disjoint rainbow independent sets. $\cF$ has a color disjoint extension to $t$ pairwise disjoint rainbow bases in $M$ if and only if for every flat $F$ of $M$
\begin{equation}\label{eqn:extension}
|c(E\setminus F)\setminus c(\cup_{i=1}^t I_i)|+\sum\limits_{i=1}^t|I_i\cap (E\setminus F)|\geq t(r_M(E)-r_M(F)).
\end{equation}
\end{lemma}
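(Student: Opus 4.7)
The plan is to handle the forward direction by direct counting, and the reverse direction by setting up a matroid union/intersection problem and then deducing the required inequality from the hypothesis via a careful choice of flat.

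For the forward direction, suppose $\cF$ extends to pairwise element-disjoint rainbow bases $\mathbf I_i = I_i\sqcup J_i$ with the color-disjoint extension property. Fix a flat $F$ of $M$; since each $\mathbf I_i$ is a basis, $|\mathbf I_i\setminus F|\ge r_M(E)-r_M(F)$, and summing over $i$ gives $\sum_i|\mathbf I_i\setminus F|\ge t(r_M(E)-r_M(F))$. Splitting $\mathbf I_i\setminus F$ into its $I_i$- and $J_i$-parts, and using that the elements of $\bigcup_i J_i$ carry pairwise distinct colors all avoiding $C_0:=c(\bigcup_iI_i)$, one obtains $\sum_i|J_i\cap(E\setminus F)|\le|c(E\setminus F)\setminus C_0|$; adding the $I_i$-contribution yields \eqref{eqn:extension}.

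For the reverse direction, set $E^{**}=\{e\in E\setminus\bigcup_i I_i:c(e)\notin C_0\}$ — a quick check confirms any admissible extension piece $J_i$ lies inside $E^{**}$. For each $i$ let $\tilde M_i$ be the restriction to $E^{**}$ of the contraction $M/I_i$, so $r_{\tilde M_i}(S)=r_M(S\cup I_i)-|I_i|$; let $\tilde M_\vee=\tilde M_1\vee\cdots\vee\tilde M_t$ be the matroid union on $E^{**}$, and let $\cP$ be the partition matroid on $E^{**}$ whose independent sets are the rainbow subsets. A color-disjoint extension exists iff $\tilde M_\vee\cap\cP$ admits a common independent set of size $t\cdot r_M(E)-\sum_i|I_i|$. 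Applying the matroid intersection theorem together with the matroid union rank formula — and observing that the outer minimum over $U$ collapses to $U=T$ because $|U\setminus T|+|c(E^{**}\setminus U)|$ is non-decreasing in $U\supseteq T$ — the problem reduces to proving
\[
\sum_i r_M(T\cup I_i)+|c(E^{**}\setminus T)|\;\ge\; t\cdot r_M(E)\quad\text{for every }T\subseteq E^{**}.
\]

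The plan for this key inequality is to pick, for the given $T$, a flat $F$ that zeroes out the $|I_i\setminus F|$ term in \eqref{eqn:extension}. For each $i$ let $K_i\subseteq I_i$ be a basis of $I_i$ in the contracted matroid $M/T$, so $|K_i|=r_M(T\cup I_i)-r_M(T)$, and take $F=cl_M(T\cup\bigcup_i K_i)$. The rank equality $r_M(T\cup K_i)=r_M(T\cup I_i)$ forces $I_i\subseteq cl_M(T\cup K_i)\subseteq F$, so $\sum_i|I_i\setminus F|=0$ and \eqref{eqn:extension} at $F$ collapses to $|c(E\setminus F)\setminus C_0|\ge t(r_M(E)-r_M(F))$. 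Combined with the inclusion $c(E\setminus F)\setminus C_0\subseteq c(E^{**}\setminus T)$ — immediate from $T\subseteq F$ together with the fact that every color outside $C_0$ is carried by an element of $E^{**}$ — this drives the target inequality. The main obstacle is the bookkeeping around $r_M(F)-r_M(T)=r_{M/T}(\bigcup_i K_i)$, which sits between $\max_i\mu_i$ and $\sum_i\mu_i$ (writing $\mu_i=r_M(T\cup I_i)-r_M(T)$); closing the residual gap may require iterating the construction or combining \eqref{eqn:extension} at several flats — for instance via the submodular bound $\sum_i r_M(T\cup I_i)\ge r_M(T\cup\bigcup_i I_i)+(t-1)r_M(T)$.
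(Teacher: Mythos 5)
Your forward direction is correct and is in fact more elementary than the paper's: the paper routes even that implication through the matroid union/intersection machinery, whereas your direct count (each basis $\mathbf{I}_i$ meets $E\setminus F$ in at least $r_M(E)-r_M(F)$ elements, and the extension elements outside $F$ carry distinct colors avoiding $C_0=c(\cup_i I_i)$) suffices. Your setup for the reverse direction---the contracted restrictions $\tilde M_i=(M/I_i)|_{E^{**}}$, their union, the rainbow partition matroid, the equivalence of a color-disjoint extension with a common independent set of size $t\,r_M(E)-\sum_i|I_i|$, and the collapse of the union/intersection minimum to sets $T\subseteq E^{**}$---coincides with the paper's argument and is sound; your formulation of the target size is even a bit more careful than the paper's.

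The genuine gap is the final step: deriving the key inequality $\sum_i r_M(T\cup I_i)+|c(E^{**}\setminus T)|\geq t\,r_M(E)$ from \eqref{eqn:extension}, and the flat you propose provably cannot do it. With $F=cl(T\cup\bigcup_i K_i)$ your computation reduces the task to $\sum_i\mu_i\geq t\cdot r_{M/T}\bigl(\bigcup_i K_i\bigr)$, where $\mu_i=r_M(T\cup I_i)-r_M(T)$; but $r_{M/T}\bigl(\bigcup_i K_i\bigr)$ can equal $\sum_i\mu_i$ (take $T=\emptyset$ and $I_1,\ldots,I_t$ mutually skew), so for $t\geq 2$ the requirement becomes $\sum_i\mu_i\geq t\sum_i\mu_i$, which fails whenever some $\mu_i>0$. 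The patches you float do not close this: the bound $\sum_i r_M(T\cup I_i)\geq r_M\bigl(cl(T\cup\bigcup_i I_i)\bigr)+(t-1)r_M(T)$ combined with \eqref{eqn:extension} at $cl(T\cup\bigcup_i I_i)$ still leaves a deficit of $(t-1)\bigl(r_M(cl(T\cup\bigcup_i I_i))-r_M(T)\bigr)$, and the opposite choice $F=cl(T)$ fails because $r_{M/T}(I_i)\leq |I_i\setminus cl(T)|$ points the wrong way. So no single application of \eqref{eqn:extension} of the type you describe yields the key inequality, and the additional argument is not supplied. Note that this is precisely the point where the paper's proof does its work, via the rank relations $r_{M_i'}(E')=r_M(E)-|I_i|$ and $r_{M_i'}(F')=r_M(F')-|I_i\cap F'|$ applied at the minimizing set (in effect treating each $I_i$ as staying independent over it); to finish along your lines you would need to establish such relations at a suitably chosen minimizer (e.g., by exploiting that a minimal minimizing $T$ is closed in each $\tilde M_i$) or find a genuinely different derivation. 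As written, the reverse direction is incomplete.
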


\begin{proof}
For $1\leq i \leq t$, define $i$-th auxiliary matroid $M_i'=(E', \I_i')$ where $E'$ is obtained from $E$ by removing all elements whose colors appear in $\cF$ and 
$$\I'_i=\{S\colon S\subseteq E' \mbox{ and } S\cup I_i\in \I\}.$$
The subset $E'$ inherits the coloring from $E$.  

Let $M'=M_1'\vee M_2'\vee\cdots\vee M_t'$ be the union matroid, and let $M''=(E',\I'')$ be the matroid with ground set $E'$ and whose independent sets are the rainbow subsets of $E'$. By the matroid union theorem,
$$r_{M'}(S)=\min_{T\subseteq S }\{|S\setminus T|+\sum_{i=1}^t r_{M_i'}(T)\}.$$
By the matroid intersection theorem,
$$\max_{I\in \I'\cap \I''}|I|=\min_{U\subseteq E'}\{r_{M'}(U)+r_{M''}(E'\setminus U)\}.$$ 
Since each $I_i$ can be extended in a color-disjoint way, it follows that 
\begin{eqnarray*}
\sum_{i=1}^t r_{M_i'}(E')&=&\max_{I\in \I'\cap \I''}|I|\\
&=&\min_{U\subseteq E'}\{r_{M'}(U)+r_{M''}(E'\setminus U)\}\\
&=&\min_{U\subseteq E'}\{\min_{T\subseteq U}\{|U\setminus T|+\sum_{i=1}^r r_{M'_i}(T)\}+r_{M''}(E'\setminus U)\}.
\end{eqnarray*}
Let $T\subseteq U\subseteq E'$ be the $T$ and $U$ which achieve the minimum above such that $U$ has the minimum number of elements. Notice that moving any element from $U\setminus T$ into $E'\setminus U$ either decreases the minimum above (contradicting the choice of $T$ and $U$) or it leaves the above minimum the same (thus contradicting the minimality of $U$). As such, we can assume $T=U$. It follows that 
$$\sum_{i=1}^t r_{M'_i}(E')=\min_{F\subseteq E': F \text{ is a flat }}\left\{\sum_{i=1}^t r_{M'_i}(F)+r_{M''}(E'\setminus F)\right\}.$$
If $F'$ is any flat of $M$, it follows that 
\begin{eqnarray*}
\sum_{i=1}^r r_{M'_i}(E')&=&\min_{F\subseteq E': F \text{ is a flat }}\left\{\sum_{i=1}^t r_{M'_i}(F)+r_{M''}(E'\setminus F)\right\}\\
&\leq & \sum_{i=1}^t r_{M'_i}(F')+r_{M''}(E'\setminus F')\\
&=&\sum_{i=1}^t r_{M'_i}(F')+|c(E\setminus F')\setminus c(\cup_{i=1}^tI_i)|.
\end{eqnarray*}
The previous line implies 
\begin{equation}\label{forwardextension}
|c(E\setminus F')\setminus c(\cup_{i=1}^tI_i)|\geq \sum_{i=1}^t(r_{M'_i}(E')-r_{M'_i}(F')).
\end{equation}
Notice that
\begin{equation}\label{rankE'}
r_{M_i'}(E')= r_M(E)-|I_i|
\end{equation}
and
\begin{equation}\label{rankF'}
r_{M_i'}(F')=r_M(F')-|I_i\cap F'|.
\end{equation}
These facts combined with Equation \ref{forwardextension} imply
$$|c(E\setminus F')\setminus c(\cup_{i=1}^t I_i)|+\sum\limits_{i=1}^t|I_i\cap (E\setminus F')|\geq t(r_M(E)-r_M(F')).$$
Since $F'$ was chosen arbitrarily as a flat of $M$, we are finished.

Now we turn to the reverse direction. Suppose we have a coloring of $M$ such that for any flat $F$ of $M$,
$$|c(E\setminus F)\setminus c(\cup_{i=1}^t I_i)|+\sum\limits_{i=1}^t|I_i\cap (E\setminus F)|\geq t(r_M(E)-r_M(F)).$$
In a similar fashion to the forward direction, we obtain

\begin{eqnarray*}
\max_{I\in \I'\cap \I''}|I|&=&\min_{F\subseteq E' : F \text{ is a flat }}\left\{\sum_{i=1}^t r_{M'_i}(F)+r_{M''}(E'\setminus F)\right\}\\
&=&\sum_{i=1}^t r_{M'_i}(F_0)+|c(E'\setminus F_0)| \hspace*{2cm} \mbox{assuming the minimum is acheived at $F_0$}
\\
&\geq & \sum_{i=1}^t r_{M'_i}(F_0)+t(r_M(E)-r_M(F_0))-\sum\limits_{i=1}^t|I_i\cap (E\setminus F_0)| \\
&=&\sum_{i=1}^t r_{M'_i}(E').
\end{eqnarray*}
The last equality holds due to Equations \ref{rankE'} and \ref{rankF'}. It follows that $\cF$ has a color-disjoint extension to $t$ pairwise disjoint rainbow bases in $M$.
\end{proof}

\begin{lemma}\label{lowerbound}
If $M=(E,\I)$ is a matroid with rank at least 2 satisfying the (tPDB) condition, then 
$$ar(M,t)\geq\max_{F\colon r_M(F) \leq r_M(F)- 2} \{|F|+t(r_M(E)-r_M(F)-1)\},$$
where the maximum is taken among all flats $F$ (with $r_M(F)\leq r_M(E)-2$ of $M$).
\end{lemma}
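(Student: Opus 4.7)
The plan is to establish the lower bound constructively: for every flat $F$ of $M$ with $r_M(F)\leq r_M(E)-2$, I exhibit a coloring that uses exactly $|F|+t(r_M(E)-r_M(F)-1)$ colors and under which $M$ admits no $t$ pairwise disjoint rainbow bases. Taking the maximum over all such flats then delivers the claimed inequality. Throughout I abbreviate $k=r_M(E)-r_M(F)\geq 2$; the (tPDB) hypothesis applied to $F$ yields $|E\setminus F|\geq tk$, which provides enough room for the construction below.

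The coloring has three layers. First, give every element of $F$ its own private color, for $|F|$ colors in total. Second, pick any $t(k-1)-1$ elements of $E\setminus F$ and give each of them a further private color. Third, paint all remaining elements of $E\setminus F$ with one single additional ``dummy'' color. Because $|E\setminus F|\geq tk\geq t(k-1)+1$, the dummy class is nonempty and in fact has at least $t+1$ elements, so the coloring is well defined and the total number of colors is $|F|+(t(k-1)-1)+1=|F|+t(k-1)$, exactly as required.

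To verify that this coloring blocks $t$ pairwise disjoint rainbow bases, I argue by contradiction. Suppose such bases $B_1,\ldots,B_t$ exist. Since each $B_i\cap F$ is independent in $M$, we have $|B_i\cap F|\leq r_M(F)$, whence $|B_i\setminus F|\geq k$ and
$$\sum_{i=1}^t|B_i\setminus F|\geq tk.$$
I then bound the same sum from above by combining two caps: the rainbow condition caps each basis's use of any single color at $1$, while element-disjointness caps the total number of elements of a given color used across all $t$ bases at the size of that color class. Consequently the $t(k-1)-1$ singleton-colored elements together contribute at most $t(k-1)-1$, and the dummy color contributes at most $t$ (at most one per basis), giving
$$\sum_{i=1}^t|B_i\setminus F|\leq (t(k-1)-1)+t=tk-1,$$
a contradiction. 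The interesting step is spotting the right coloring: making all but one color class on $E\setminus F$ a singleton turns the rainbow cap and the element-disjoint cap into simultaneously sharp restrictions, so the two-sided count closes at the needed margin of $tk$ versus $tk-1$. The main thing to keep straight in the write-up is the distinction between these two caps; once both are tracked, the argument closes cleanly.
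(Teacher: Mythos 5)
Your proposal is correct and essentially reproduces the paper's own proof: the same coloring (private colors on $F$, then $t(r_M(E)-r_M(F)-1)-1$ private colors on $E\setminus F$, one shared color on the rest, using the (tPDB) condition to ensure enough elements) and the same counting contradiction, only phrased as a two-sided count ($tk$ versus $tk-1$) instead of the paper's per-basis deficit count. The only cosmetic difference is that your argument treats $t=1$ uniformly, whereas the paper handles $t=1$ separately via Theorem \ref{colordisjointbases}.
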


\begin{proof}
We exhibit a coloring of the ground set of $M$ which contains no $t$ pairwise disjoint rainbow bases. We first consider the case when $t=1$. When $t=1$, we are in the color-disjoint setting so Theorem \ref{colordisjointbases} applies. Let $F$ be the flat of $M$ which achieves the maximum above. Color $F$ in unique colors and color the remaining $|E\setminus F|$ in $r_M(E)-r_M(F)-1$ colors. Since $|c(E\setminus F)|= r_M(E)-r_M(F)-1<r_M(E)-r_M(F)$, $F$ violates Theorem \ref{colordisjointbases} and $M$ contains no rainbow basis. 

We now assume $t\geq 2$. Let $F$ be any flat of $M$ with $r_M(F)\leq r_M(E)-2$. Color every element of $F$ a unique color. This contributes $|F|$ colors. Since $M$ satisfies the (tPDB) condition and $r_M(E)-r_M(F)\geq 2$, we can color $t(r_{M}(E)-r_M(F)-1)-1$ elements of $E\setminus F$ unique colors, and color the remaining elements of $E\setminus F$ in a new color $c_s$. This contributes an additional $t(r_{M}(E)-r_M(T)-1)$ colors. It remains to show that $M$ contains no $t$ pairwise disjoint rainbow bases. 

Suppose $M$ contains $t$ pairwise disjoint rainbow bases $\{B_1,B_2,\cdots, B_t\}$. By restricting $M$ to the set $E\setminus F$, it is clear that each $B_i$ contributes $|B_i|-|B_i\cap F|-1$ elements not in color $c_s$ to $E\setminus F$. Since $B_i\cap F$ is independent in $F$, $|B_i\cap F|\leq r_M(F)$. It follows that
\begin{eqnarray*}
|\{e\in E\setminus F : c(e)\neq c_s\}|&\geq & \sum_{i=1}^t(|B_i|-|B_i\cap F|-1)\\
&\geq & t(r_M(E)-r_M(F)-1).
\end{eqnarray*}
But, $E\setminus F$ has only $t(r_{M}(E)-r_M(F)-1)-1$ elements which are not in color $c_s$. This is a contradiction and thus 
$$ar(M,t)\geq\max_{F\colon r_M(F) \leq r_M(E)- 2} \{|F|+t(r_M(E)-r_M(F)-1)\}.$$
\end{proof}

Now we assume $M$ is a matroid with rank at least two and is equipped with a coloring of $E$ with $\max_{F\colon r_M(F) \leq r_M(E)- 2} \{|F|+t(r_M(E)-r_M(F)-1)\}+1$ colors.
For a flat $F$ of $M$, we define
\begin{align*}
\eta(F)&:=|F|-|c(F)|\\
\shortintertext{and}
\xi(F)&:=|c(F)\cap c(E\setminus F)|.
\end{align*}
A quick application of the inclusion-exclusion principle shows
\begin{eqnarray*}
|c(E)|-|F|&=&|c(F)\cup c(E\setminus F)|-|F|\\
&=&|c(F)|+|c(E\setminus F)|-|c(F)\cap c(E\setminus F)|-|F|\\
&=&|c(E\setminus F)|-\eta(F)-\xi(F).
\end{eqnarray*}
It follows that 
\begin{equation}\label{eq:E/TColors}
 |c(E\setminus F)| = |c(E)| -|F| +\eta(F) +\xi(F). 
\end{equation}
Since $c(E) \geq \max_{F\colon r_M(F) \leq r_M(E)- 2} \{|F|+t(r_M(E)-r_M(F)-1)\}+1$, we then have the following corollary:
\begin{corollary}\label{interiorcolors} 
Let $M=(E,\I)$ be a matroid with a coloring of $E$ with at least $\max_{F\colon r_M(F) \leq r_M(E)- 2} \{|F|+t(r_M(E)-r_M(F)-1)\}+1$ colors appearing in $E$. Then for any flat $F$ of $M$, we have
\begin{enumerate}
    \item if $r_M(F) \leq r_M(E)- 2$, then
    \begin{equation}\label{eq:T2-cross}
    |c(E\setminus F)|\geq t(r_M(E)-r_M(F)-1)+\eta(F) + \xi(F) +1;
    \end{equation}
    \item if $r_M(F) = r_M(E)- 1$ and $|c(E)|>|F|$, then
       \begin{equation}\label{eq:T1-cross}
       |c(E\setminus F)|\geq \eta(F) + \xi(F) +1.
       \end{equation}
\end{enumerate}
\end{corollary}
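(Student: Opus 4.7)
The plan is to derive both inequalities by a direct substitution into the identity (\ref{eq:E/TColors}), namely
\[
|c(E\setminus F)| = |c(E)| - |F| + \eta(F) + \xi(F),
\]
which was already established immediately before the corollary. So the entire content of the corollary is a reformulation of this identity once we feed in an appropriate lower bound for $|c(E)|$.

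For part (1), I would argue as follows. Since $F$ is a flat with $r_M(F) \leq r_M(E)-2$, the quantity $|F| + t(r_M(E)-r_M(F)-1)$ is one of the values over which the maximum in the hypothesis is taken. Hence
\[
|c(E)| \;\geq\; |F| + t(r_M(E)-r_M(F)-1) + 1.
\]
Plugging this into (\ref{eq:E/TColors}) and rearranging yields (\ref{eq:T2-cross}) immediately.

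For part (2), the flat $F$ has $r_M(F) = r_M(E)-1$, so it is not among the flats in the maximum, and the global hypothesis on $|c(E)|$ does not directly help. Instead we use the additional assumption $|c(E)| > |F|$, i.e. $|c(E)| \geq |F| + 1$. Substituting into (\ref{eq:E/TColors}) gives
\[
|c(E\setminus F)| \;\geq\; 1 + \eta(F) + \xi(F),
\]
which is exactly (\ref{eq:T1-cross}). Since both parts are essentially one-line algebra once (\ref{eq:E/TColors}) is in hand, there is no real obstacle; the only thing to be careful about is to correctly identify, in each of the two cases, which hypothesis delivers the needed lower bound on $|c(E)|$.
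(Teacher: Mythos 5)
Your argument is exactly the paper's: the corollary is stated there as an immediate consequence of the identity $|c(E\setminus F)| = |c(E)| - |F| + \eta(F) + \xi(F)$ together with the lower bound on $|c(E)|$ (using that $F$ itself appears in the maximum when $r_M(F)\leq r_M(E)-2$, and the extra hypothesis $|c(E)|>|F|$ in the corank-one case). Both cases check out, so the proposal is correct and essentially identical to the paper's reasoning.
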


An important observation is that the flats which satisfy Corollary \ref{interiorcolors} are not ``far" from satisfying Inequality \ref{eqn:extension}. This observation is the intuition behind the following lemma:

\begin{lemma}\label{subindependentlargeenough} Let $\I = \{I_1,I_2,\ldots,I_t\}$ be a set of $t$ pairwise disjoint rainbow independent sets such that $\sum\limits_{i=1}^t|I_i|\geq t-1+|c(\cup_{i=1}^t I_i)|$, and each color appearing in $\cup_{i=1}^t I_i$ appears at least twice. Then for any flat satisfying Inequality \eqref{eq:T2-cross} (when $r_M(F) \leq r_M(E)- 2$) and Inequality \eqref{eq:T1-cross} (when $r_M(F) = r_M(E)- 1$), we have that
\begin{equation}\label{extensioninequality}|c(E\setminus F)\setminus c(\cup_{i=1}^t I_i)|+\sum\limits_{i=1}^t|I_i\cap (E\setminus F)|\geq t(r_M(E)-r_M(F)).
\end{equation}
\end{lemma}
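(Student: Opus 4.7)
The plan is to verify Inequality (\ref{extensioninequality}) by carefully tracking how the colors of $I := \bigcup_{i=1}^t I_i$ interact with the partition $(F, E\setminus F)$, and then invoking both the structural lower bounds on $\eta(F)$, $\xi(F)$ and the sum hypothesis on the $|I_i|$'s. Set $s = \sum_i |I_i| = |I|$ (by disjointness), $k = |c(I)|$, $\alpha = |I \cap F|$, and $\beta = |I \cap (E \setminus F)| = \sum_i |I_i \cap (E \setminus F)|$, so $\alpha + \beta = s \geq t - 1 + k$. The only flat of rank $r_M(E)$ is $E$ itself, and then the target inequality is trivial, so below I assume $r_M(F) \leq r_M(E) - 1$.

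First I would classify the $k$ colors of $c(I)$ into four types according to their interaction with the partition: $a$ colors appear in $I \cap F$ and (only via non-$I$ elements) in $E \setminus F$; $b$ colors appear in $I \cap F$ and nowhere else in $E \setminus F$; $x$ colors appear in both $I \cap F$ and $I \cap (E \setminus F)$; $y$ colors appear only in $I \cap (E \setminus F)$. These exhaust $c(I)$, so $k = a + b + x + y$, and a quick inspection gives $|c(I) \cap c(E \setminus F)| = a + x + y$, hence
\[ |c(E\setminus F) \setminus c(I)| = |c(E \setminus F)| - (a + x + y). \]

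Next I would derive two structural lower bounds. Since each element of $F \setminus I$ contributes at most one new color, $|c(F)| \leq |c(I \cap F)| + |F \setminus I|$, and therefore
\[ \eta(F) = |F| - |c(F)| \geq \alpha - |c(I \cap F)| = \alpha - (a + b + x). \]
For $\xi(F) = |c(F) \cap c(E\setminus F)|$, the $a$ type-FF colors lie in $c(F) \cap c(E\setminus F)$ by construction, and each of the $x$ type-XF colors does too via its elements in $I \cap (E\setminus F)$, so $\xi(F) \geq a + x$. Summing yields the single identity carrying the proof: $\eta(F) + \xi(F) \geq \alpha - b$.

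For $r_M(F) \leq r_M(E) - 2$, applying Inequality (\ref{eq:T2-cross}) gives
\begin{align*}
|c(E\setminus F) \setminus c(I)| + \beta
&\geq t(r_M(E) - r_M(F) - 1) + \eta(F) + \xi(F) + 1 - (a + x + y) + \beta \\
&\geq t(r_M(E) - r_M(F) - 1) + (\alpha - b) + 1 - (a + x + y) + \beta \\
&= t(r_M(E) - r_M(F) - 1) + (s - k) + 1 \\
&\geq t(r_M(E) - r_M(F)),
\end{align*}
the final step using $s - k \geq t - 1$. The $r_M(F) = r_M(E) - 1$ case is identical using Inequality (\ref{eq:T1-cross}); the $t(r_M(E) - r_M(F) - 1)$ term drops to zero and the chain collapses to $|c(E\setminus F) \setminus c(I)| + \beta \geq s - k + 1 \geq t = t(r_M(E) - r_M(F))$. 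The main subtle point will be choosing the four-way color split so that the bound $\eta(F) + \xi(F) \geq \alpha - b$ falls out cleanly; once that is in hand the arithmetic closes exactly, and the sum hypothesis on the $|I_i|$'s provides precisely the slack required.
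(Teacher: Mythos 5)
Your proposal is correct, and at the skeleton level it follows the same strategy as the paper: write $|c(E\setminus F)\setminus c(\cup_{i=1}^t I_i)| = |c(E\setminus F)| - |c(\cup_{i=1}^t I_i)\cap c(E\setminus F)|$, lower-bound $|c(E\setminus F)|$ via Corollary \ref{interiorcolors}, and finish with the hypothesis $\sum_{i=1}^t|I_i|\geq t-1+|c(\cup_{i=1}^t I_i)|$. The genuine difference lies in how the one nontrivial term is handled. After the bookkeeping, both arguments need exactly the inequality $\eta(F)+\xi(F)\geq |(\cup_{i=1}^t I_i)\cap F| + |c(\cup_{i=1}^t I_i)\cap c(E\setminus F)| - |c(\cup_{i=1}^t I_i)|$, which in your notation is $\eta(F)+\xi(F)\geq \alpha-b$. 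The paper obtains this by comparing $\eta(F)+\xi(F)$ with $\eta(\cup_{i=1}^t I_i)+\xi(\cup_{i=1}^t I_i)$ and invoking Equation \eqref{eq:E/TColors} with $F$ replaced by $\cup_{i=1}^t I_i$, a rather terse step; you instead prove it directly through the four-way classification of the colors of $\cup_{i=1}^t I_i$ (read the types as a partition, with the ``both parts of $I$'' and ``only in $I\cap(E\setminus F)$'' types taking precedence), using only $|c(I\cap F)|=a+b+x$ and $|c(I)\cap c(E\setminus F)|=a+x+y$, and the arithmetic then closes exactly in both rank cases. This buys two things: the crucial inequality is made fully explicit rather than inferred from an analogy, and, as a by-product, your argument never uses the hypothesis that every color of $\cup_{i=1}^t I_i$ appears at least twice, so the lemma as stated holds without that assumption (it is only relevant to how the lemma is deployed later in the proof of Theorem \ref{main}).
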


\begin{proof}
Observe that given any flat $F$, 
$$\dss_{i=1}^t |I_i| = \dss_{i=1}^t |I_i\cap(E\setminus F)| + \dss_{i=1}^t |I_i\cap F|.$$
Since $\sum\limits_{i=1}^t|I_i|\geq t-1+|c(\cup_{i=1}^t I_i)|$, it follows that 
\begin{equation}
    \dss_{i=1}^t |I_i\cap(E\setminus F)| \geq (t-1) + |c(\displaystyle\cup_{i=1}^t I_i)| - \dss_{i=1}^t |I_i\cap F|. \label{eq:cl1a}
\end{equation}

Hence it follows that 
\begin{align*}
&\hspace*{-1cm} |c(E\setminus F)\setminus c(\cup_{i=1}^t I_i)|+\sum_{i=1}^t |I_i\cap (E\setminus F)|- t(r_M(E)-r_M(F)) \\
 &=  |c(E\setminus F)|- |c\left(\cup_{i=1}^t I_i\right)\cap c(E\setminus F)|+\sum_{i=1}^t|I_i\cap (E\setminus F)| - t(r_M(E)-r_M(F)) \\
 &\geq  \lp t(r_M(E)-r_M(F)-1)  + \eta(F) + \xi(F)+1\rp  - |c\left(\cup_{i=1}^t I_i\right)\cap c(E\setminus F)| \hspace*{1cm} \mbox{ by  Corollary \ref{interiorcolors}}  
 \\
   & \qquad+ \lp (t-1) + |c(\displaystyle\cup_{i=1}^t I_i)| - \dss_{i=1}^t |I_i\cap F| \rp - t(r_M(E)-r_M(F)) 
   \hspace*{1cm}
\mbox{ and Inequality \eqref{eq:cl1a}}   \\
 &=  |c(\displaystyle\cup_{i=1}^t I_i)| + \eta(F) + \xi(F) - \lp \dss_{i=1}^t |I_i\cap F|  + |c\left(\cup_{i=1}^t I_i\right)\cap c(E\setminus F)| \rp \\
 &\geq |c(\cup_{i=1}^t I_i)| + \eta(\cup_{i=1}^t I_i) + \xi(\cup_{i=1}^t I_i) - \lp \dss_{i=1}^t |I_i\cap F|  + |c\left(\cup_{i=1}^t I_i\right)\cap c(E\setminus F)| \rp \\
 &=  0,
\end{align*}
where the equality in the last step is essentially Equation \eqref{eq:E/TColors} by replacing $F$ with $\cup_{i=1}^t I_i$.
This completes the proof of the lemma.
\end{proof}

\section{Proof of Theorem \ref{main}}\label{MainProof}

\begin{proof}[Proof of Theorem \ref{main}]
We first handle the case of $t=1$. The lower bound is given by Lemma \ref{lowerbound}. Let $M=(E,\I)$ be a matroid and color $E$ with $\max_{F : r_M(F)\leq r_M(E)-2}\{|F|+r_M(E)-r_M(F)-1\}+1$ colors. When $t=1$, color-disjoint bases and disjoint rainbow bases are the same, thus Theorem \ref{colordisjointbases} applies. If $F$ is a flat with $r_M(F)\leq r_M(E)-2$, then 
\begin{eqnarray*}
|c(E\setminus F)|&\geq& |c(E)|-|F|\\
&\geq & |F|+r_M(E)-r_M(F)-|F|\\
&=& r_M(E)-r_M(E).
\end{eqnarray*}
As such, the condition of Theorem \ref{colordisjointbases} holds for every flat with $r_M(F)\leq r_M(E)-2$. If $r_M(F)=r_M(E)-1$, then the condition of Theorem \ref{colordisjointbases} reads $|c(E\setminus F)|\geq r_M(E)-r_M(F) =1$. Since $r_M(F)=r_M(E)-1$, it follows that $F\neq E$, and as such there is a color on $E\setminus F$. It follows that every flat satisfies Theorem \ref{colordisjointbases} and as such $M$ contains a rainbow basis. 

We now assume $t\geq 2$. The lower bound is given by Lemma \ref{lowerbound}. For the sake of contradiction, let $M=(E,\I)$ be a minimal (w.r.t. $|E|$) matroid with rank at least 2 that satisfies the (tPDB) condition which is a counter-example to the claim 
$$ar(M,t)\leq\max_{F\colon r_M(F) \leq r_M(E)- 2} \{|F|+t(r_M(E)-r_M(F)-1)\},$$
i.e., there exists a coloring of the elements of $E$ with $\max_{F\colon r_M(F) \leq r_M(E)- 2} \{|F|+t(r_M(E)-r_M(F)-1)\}+1$ colors with no $t$ pairwise disjoint rainbow bases. We can assume $M$ contains no rank-0 elements, since coloring all such elements in a unique color does not affect the existence of $t$ pairwise disjoint rainbow bases. Fix a coloring of the elements of $E$ with the described number of colors. Let $E_2$ be the subset of $E$ which consists of all elements whose color appears more than once in $E$. For any element $x\in E_2$ define
$$cl_2(x)=cl(x)\cap E_2=\{y\in E_2: r_{M}(\{x,y\})=1\}\cup\{x\}.$$
I.e. $cl_2(x)$ is the set of elements in $E_2$ which form 2-cycles with $x$ together with $x$.
We make the following claim:

\begin{claim}\label{2cycles}
For every $x\in E_2$, $|cl_2(x)|\leq t$.
\end{claim}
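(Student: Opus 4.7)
The strategy is proof by contradiction: assume $|cl_2(x)|\geq t+1$ and produce $t$ pairwise disjoint rainbow bases of $M$, contradicting the standing hypothesis that $M$ admits no such family. The tool is Lemma~\ref{subindependentlargeenough}, to which we feed a suitable family $\cF=\{I_1,\ldots,I_t\}$ of pairwise disjoint rainbow independent sets.

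Pick $t+1$ distinct elements $y_1,\ldots,y_{t+1}\in cl_2(x)$. Since each $y_i\in E_2$, there is an element $z_i\neq y_i$ with $c(z_i)=c(y_i)$. A key preliminary observation is that if $c(y_i)$ differs from every other $c(y_j)$, then $z_i\notin cl(x)$: for otherwise $z_i\in cl(x)\cap E_2=cl_2(x)$, yet every element of $cl_2(x)$ sharing $y_i$'s color must coincide with some $y_j$. Consequently $\{y_j,z_i\}$ is independent in $M$ for every $j$, since $y_j\in cl(x)=cl(y_j)$ while $z_i\notin cl(x)$.

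Next I would construct $\cF$ satisfying $\sum|I_i|\geq t-1+|c(\cup I_i)|$ with every color in $\cup I_i$ repeating in $E$. The cleanest case is when the $y_i$'s all have distinct colors: set $I_i=\{y_i,z_{i+1}\}$ for $i=1,\ldots,t$, which is rainbow since $c(y_i)\neq c(y_{i+1})=c(z_{i+1})$, independent by the observation above, and pairwise disjoint since the $z_i$'s carry distinct colors and lie outside $Y$; this yields $\sum|I_i|-|c(\cup I_i)|=2t-(t+1)=t-1$, exactly meeting the threshold. The opposite extreme is when some color class in $cl_2(x)$ has at least $t$ elements; then $t$ same-colored singletons suffice, giving $t-1$ repetitions. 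For intermediate color distributions the construction interleaves singletons drawn from the largest color class with pairs $\{y_i,z_j\}$ chosen so that each added element either matches a color already represented in $\cup I_i$ (contributing a repetition) or imports its own color-mate into a subsequent set.

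With $\cF$ in hand, Lemma~\ref{subindependentlargeenough} (together with Corollary~\ref{interiorcolors}, which applies because the coloring uses more than $\max_F\{|F|+t(r_M(E)-r_M(F)-1)\}$ colors) produces a color-disjoint extension of $\cF$ to $t$ pairwise disjoint rainbow bases of $M$, the desired contradiction. The principal obstacle is the intermediate case: one must simultaneously secure independence (via $z_j\notin cl(x)$), the rainbow property, pairwise disjointness of the $I_i$'s, and the counting bound $\sum|I_i|-|c(\cup I_i)|\geq t-1$, which forces a somewhat intricate case analysis based on the multiplicities of colors in $cl_2(x)$ and on whether those repeated colors occur outside $cl(x)$ in $E$.
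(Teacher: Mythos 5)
Your route is genuinely different from the paper's, and it has two real gaps. The paper proves Claim \ref{2cycles} with a short deletion argument that leans on the minimality of the counterexample $M$: since $cl(x)$ has rank $1$, each of the $t$ pairwise disjoint bases guaranteed by the (tPDB) condition contains at most one element of $cl_2(x)$, so $|cl_2(x)|\geq t+1$ forces some $y\in cl_2(x)$ to lie in none of them; deleting $y$ preserves (tPDB), the rank, and (because $y\in E_2$) the number of colors, so by minimality $M\setminus y$ already has $t$ pairwise disjoint rainbow bases, which are bases of $M$. You never use minimality and instead try to build the bases directly via Lemma \ref{subindependentlargeenough}. The first gap is that this machinery is not available here in the form you invoke it: Lemma \ref{subindependentlargeenough} yields Inequality \eqref{extensioninequality} only for flats satisfying \eqref{eq:T2-cross} or \eqref{eq:T1-cross}, while Lemma \ref{colordisjointextension} needs \eqref{extensioninequality} for \emph{every} flat. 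For a flat $F$ with $r_M(F)=r_M(E)-1$, Corollary \ref{interiorcolors}(2) requires the extra hypothesis $|c(E)|>|F|$, which is \emph{not} a consequence of the coloring having $\max_{F}\{|F|+t(r_M(E)-r_M(F)-1)\}+1$ colors (that maximum ranges only over flats of corank at least $2$; a corank-$1$ flat can have $|F|\geq |c(E)|$ while (tPDB) still holds). In the paper this hypothesis is secured only inside Claim \ref{BaseCase}, where the assumed tight flat of corank at least $2$ forces $|c(E)|\geq |E|-t+1$; at the point of Claim \ref{2cycles} you have no such bound, so corank-$1$ flats may violate \eqref{eq:T1-cross} and your chain of lemmas does not close.

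The second gap is in the construction itself. Even in your ``cleanest case,'' the claim $z_i\notin cl(x)$ rests on the assertion that every element of $cl_2(x)$ sharing $y_i$'s color is one of the chosen $y_j$'s; this fails when $|cl_2(x)|>t+1$ (or when a color-mate of $y_i$ lies in $cl_2(x)\setminus\{y_1,\ldots,y_{t+1}\}$), in which case $\{y_i,z_{i+1}\}$ need not be independent and the pair must be rechosen or handled in a separate case. More importantly, the ``intermediate case'' carrying most of the difficulty (simultaneously guaranteeing independence, the rainbow property, disjointness, and $\sum_i|I_i|\geq t-1+|c(\cup_i I_i)|$) is only sketched, not proved, so the argument is incomplete even granting the first issue. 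The paper's minimality/deletion argument sidesteps all of this; I recommend adopting it, or, if you want a constructive proof, you must both carry out the intermediate case and supply a separate verification of \eqref{extensioninequality} for corank-$1$ flats.
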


\begin{proof}
Suppose there exists an $x\in E_2$ such that $|cl_2(x)|\geq t+1$. Since $M$ contains $t$ pairwise disjoint bases, there must be at least one $y\in cl_2(x)$ such that $y$ is in none of these disjoint bases. Let $M\setminus y$ be the restriction matroid. Since $y$ is in none of the disjoint bases, $M\setminus y$ satisfies the (tPDB) condition and furthermore $r_{M\setminus y}(E\setminus y)=r_M(E)$. 
By the minimality of $M$ and the fact that the color on $y$ appears elsewhere in $M$,
\begin{eqnarray*}
ar(M\setminus y,t)&\leq& \max_{F\colon F\subseteq {E\setminus y},\; r_{M\setminus y}(F) \leq r_{M\setminus y}(E\setminus y)- 2} \{|F|+t(r_{M\setminus y}(E\setminus  y)-r_{M\setminus y}(F)-1)\}\\
&\leq & \max_{F\colon F\subseteq {E\setminus y}, \; r_{M}(F) \leq r_{M}(E)- 2} \{|F|+t(r_M(E)-r_M(F)-1)\}\\
&\leq& \max_{F\colon r_{M}(F) \leq r_{M}(E)- 2} \{|F|+t(r_M(E)-r_M(F)-1)\}\\
&<& |c(E)|\\
&=& |c(E\setminus y)|.
\end{eqnarray*}
As such, $M\setminus y$ and thus $M$ contains $t$ pairwise disjoint rainbow bases contradicting the choice of $M$. 

\end{proof}

\begin{claim}\label{BaseCase}
For every flat $F$ of $M$ with $ r_M(F) \leq r_M(E)- 2$, we have $|E|-|F|\geq t(r_M(E)-r_M(F))+1$.
\end{claim}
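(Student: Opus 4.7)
I will argue by contradiction. Suppose there is a flat $F_0$ with $r_M(F_0)\le r_M(E)-2$ and $|E|-|F_0|=t(r_M(E)-r_M(F_0))$, so the (tPDB) inequality is tight at $F_0$. The plan is to produce $t$ pairwise disjoint rainbow bases of $M$ in the given coloring, contradicting the choice of $M$ as a counterexample.

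The first step is to extract the structural consequence of tightness. Plugging $F=F_0$ into the coloring hypothesis $|c(E)|\ge \max_{F\colon r_M(F)\le r_M(E)-2}\{|F|+t(r_M(E)-r_M(F)-1)\}+1$ yields $|c(E)|\ge |E|-t+1$, hence $\eta(E)=|E|-|c(E)|\le t-1$. In particular every color has multiplicity at most $t$ in $E$ (matching the parallel-class bound $|cl_2(x)|\le t$ from Claim \ref{2cycles}), and there are at most $t-1$ duplicate elements in total. Next, I would construct a family $\cF=\{I_1,\ldots,I_t\}$ of pairwise disjoint rainbow independent sets whose union $\cup_{i=1}^t I_i$ is exactly the set of all duplicate-color elements, with each copy of a repeated color assigned to a distinct $I_i$ (possible since every multiplicity is at most $t$). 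With this choice, every color in $\cup_{i=1}^t I_i$ appears at least twice there, and $\sum_{i=1}^t |I_i|-|c(\cup_{i=1}^t I_i)|=\eta(E)$.

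In the generic case $\eta(E)=t-1$, Lemma \ref{subindependentlargeenough} applies directly — its hypotheses \eqref{eq:T2-cross} and \eqref{eq:T1-cross} are given by Corollary \ref{interiorcolors} — so $\cF$ extends color-disjointly to $t$ pairwise disjoint rainbow bases of $M$, the desired contradiction. In the boundary case $\eta(E)<t-1$, the coloring has additional slack; this slack pushes the bound in Corollary \ref{interiorcolors} far enough to verify the condition $|c(E\setminus F)|\ge t(r_M(E)-r_M(F))$ of Lemma \ref{colordisjointextension} with $\cF=\emptyset$ at every flat, again producing $t$ pairwise disjoint rainbow bases.

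The main obstacle is the construction of $\cF$: distributing the duplicate elements across the $I_i$'s so that each $I_i$ remains independent in $M$ is a matroid-partitioning problem. Claim \ref{2cycles} supplies the combinatorial room, and the basis-exchange axiom handles local independence corrections, but delicate care is needed when several duplicates cluster in a common low-rank sub-flat sitting inside $F_0$; in that situation one may additionally appeal to the minimality of $M$ by deleting a well-chosen duplicate element to reduce to a smaller instance still satisfying (tPDB) at the non-$F_0$ flats.
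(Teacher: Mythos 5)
Your setup matches the paper's: assume tightness at a low-rank flat, deduce $|c(E)|\ge |E|-t+1$, and try to build $t$ pairwise disjoint rainbow independent sets covering the repeated colors so that Lemma \ref{subindependentlargeenough} and Lemma \ref{colordisjointextension} yield $t$ pairwise disjoint rainbow bases. But the step you defer as ``the main obstacle'' is in fact the core of the paper's proof, and you do not supply an argument for it. Making the $I_i$'s independent is not handled by a generic appeal to basis exchange: the paper distributes the elements of $E_2$ greedily with \emph{at most two} elements per $I_i$ (so the only possible dependencies are $2$-circuits), and then runs a swapping argument using the circuit axiom (C2) to show that if some dependent pair cannot be repaired by exchanging with another $I_j$, then every $I_j$ donates a distinct element to $cl_2(x)$, forcing $|cl_2(x)|\ge t+1$ and contradicting Claim \ref{2cycles}. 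Your alternative suggestion --- delete a well-chosen duplicate and invoke minimality --- is circular in spirit: finding a deletable element that preserves the (tPDB) condition is exactly what Claim \ref{edgeremoval} does \emph{after} (and using) Claim \ref{BaseCase}; at this stage you cannot rule out several tight flats, so the deletion may destroy (tPDB).

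Your treatment of the ``boundary case'' $\eta(E)<t-1$ is also unjustified. With $\cF=\emptyset$, Lemma \ref{colordisjointextension} (equivalently Theorem \ref{colordisjointbases}) demands $|c(E\setminus F)|\ge t(r_M(E)-r_M(F))$ at every flat, while Corollary \ref{interiorcolors} only provides $t(r_M(E)-r_M(F)-1)+\eta(F)+\xi(F)+1$, which can fall short by up to $t-1$; the slack $\eta(E)<t-1$ does not close this gap by any argument you give. (In fact the case is vacuous: tightness at $F_0$ together with (tPDB) at every other flat of corank at least $2$ forces the maximum defining $|c(E)|$ to be attained at $F_0$, so $|c(E)|=|E|-t+1$ and $\eta(E)=t-1$ exactly --- but you neither observe this nor prove it.) The paper avoids the issue entirely: when all of $E_2$ is placed into the $I_i$'s, the left-hand side of \eqref{eqn:extension} equals $|E\setminus F|$ and (tPDB) finishes the argument, with no separate slack estimate needed. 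As written, your proposal identifies the right framework but leaves the decisive independence/swapping argument and the residual case unproved.
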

\begin{proof}
Suppose there exists a flat $F'$ of $M$ with $r_M(F')\leq r_M(E)-2$ and $|E|-|F'|= t(r_M(E)-r_M(F'))$. Then,
\begin{eqnarray*}
|c(E)|&=&\max_{F\colon r_M(F) \leq r_M(E)- 2} \{|F|+t(r_M(E)-r_M(F)-1)\}+1\\
&\geq &|F'|+t(r_M(E)-r_M(F')-1)+1\\
&=& |F'|+|E|-|F'|-t+1\\
&=&|E|-t+1.
\end{eqnarray*}
Let $F$ be a flat such that $r_M(F)=r_M(E)-1$. Since $M$ satisfies the (tPDB) condition, we have \begin{eqnarray*}
|E|-|F|&\geq& t(r_M(E)-r_M(F))\\
&=& t.
\end{eqnarray*}
It follows that $|c(E)| - 1 \geq |E|-t \geq |F|$ and as such $|c(E)|> |F|$. It follows that any flat of $M$ satisfies one of the inequalities in Corollary \ref{interiorcolors}. 

Next we will show that $M$ contains $t$ pairwise disjoint rainbow bases, which contradicts our assumption of $M$.
By Lemma \ref{colordisjointextension}, it suffices to construct $t$ pairwise disjoint rainbow independent sets $I_1, \ldots, I_t$ such that any flat $F$ of $M$ satisfies Inequality \eqref{extensioninequality} in Lemma \ref{colordisjointextension}. We say a color $c$ has \textit{multiplicity} $k$ in $M$ if the number of elements with color $c$ in $M$ is $k$.
Let $m_1\geq m_2\geq\ldots\geq m_s\geq 2$ be the multiplicities of all colors $c_1, c_2,\ldots, c_s$ respectively that have multiplicity at least two in $M$. We have a few cases:

\begin{description}

\item Case 1: $m_1\geq t$. In this case, place one element of color $c_1$ in each of the independent sets $I_1, \ldots, I_t$. We have
$$\sum_{i=1}^t |I_i|=t=(t-1)+|c\left(\cup_{i=1}^t I_i\right)|.$$
Hence by Lemma \ref{subindependentlargeenough} and then by Theorem \ref{colordisjointextension}, $M$ has $t$ pairwise disjoint rainbow bases, contradicting the choice of $M$.

\item Case 2:  $m_1\leq t-1$. In this case, we can place one element of color $c_1$ in each of $\{I_1,I_2,\ldots,I_{m_1}\}$. Then in a similar fashion, place one element of color $c_2$ in each of $I_{m_1+1}, \ldots, I_{m_1 + m_2}$ (with $I_{t+i}\equiv I_i$). Perform this greedy construction for all colors in $E_2$ (in the order of decreasing multiplicity) until all elements of $E_2$ have been added to $I_1, I_2, \ldots, I_t$ or until each $I_i$ receives two elements. 

We claim that there exists a swapping of elements in the $I_i$'s so that the resulting sets are all independent. Arrange the $I_i$ so that $I_1,I_2,\cdots I_k$ are 2-cycles and the remaining are independent. Consider first $I_1=\{x,y\}$. We claim there is an $I_j$, $j>1$ and a swapping of the elements so that $I_1$ and $I_j$ are independent and rainbow. Consider any set $I_j$, $1<j\leq k$. If there is no swapping of the elements in $I_1$ and $I_j$ to make both sets independent and rainbow, then (C2) above implies that all pairs of elements in $I_1$ and $I_j$ are 2-cycles. It follows that for all  $1<j\leq k$, $I_j$ contributes at least 1 unique element to $cl_2(x)$. 

Now consider any $I_j$, $k<j\leq t$. If $|I_j|=1$ and $I_j\cup \{y\}$ is independent, then we are done. Otherwise $I_j\cup \{y\}$, and thus by (C2), $I_j\cup\{x\}$, are 2-cycles. It follows that each such $I_j$ contributes a unique element to $cl_2(x)$. If $|I_j|=2$ and there is no swapping of elements in $I_1$ and $I_j$ which leaves both sets rainbow and independent, then $x$ combined with some element of $I_j$ must be a 2-cycle. It follows that for each $k<j\leq t$, each $I_j$ contributes at least 1 unique element to $cl_2(x)$. Therefore, the $I_2,\cdots, I_t$ contribute at least $t-1$ unique elements to $cl_2(x)$ and $\{x\}$ and $\{y\}$ both contribute one. It follows that $|cl_2(x)|\geq t+1$ which is a contradiction to Claim \ref{2cycles}. As such, we can find such a swapping. Make the swap and reorder the $I_i$'s so that the first $k'$ are 2-cylces and the remaining are independent and then repeat this argument with the new $I_1$ to find a new swapping. Clearly after each swap, the number of dependent sets decreases by at least one. It follows that we can continue in this way until all sets are independent. 

If all elements of $E_2$ have been added to $I_1, I_2, \ldots, I_t$, then we have that the left hand side of Inequality \ref{eqn:extension} is in fact equal to $|E\setminus T|$. Since $M$ satisfies the (tPDB) condition, Inequality \ref{eqn:extension} holds.
Hence by Lemma \ref{colordisjointextension}, $M$ contains $t$ pairwise disjoint bases, contradicting the choice of $M$.
Otherwise, each $I_i$ has exactly two elements. Since there are at most $t$ colors appearing in the $I_i$'s, we then have
$$\sum_{i=1}^t|I_i|\geq (t-1)+c\left(\cup_{i=1}^tI_i\right),$$
in which case by Lemma \ref{subindependentlargeenough} and then by Lemma \ref{colordisjointextension}, $M$ contains $t$ pairwise disjoint rainbow bases, contradicting the choice of $M$ again.
\end{description} 
\end{proof}
\begin{claim}\label{edgeremoval}
There exists an element $e\in E_2$ whose removal maintains that in the restriction matroid $M\setminus e$, each flat $F$ satisfies the (tPDB) condition. 
\end{claim}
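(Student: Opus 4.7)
The plan is to identify precisely which flats of $M\setminus e$ could violate the (tPDB) condition, and then show that all obstructions can be killed by a single $e\in E_2$. Since $M$ satisfies (tPDB) with $t\geq 2$, no element is a coloop and hence $r_{M\setminus e}(E\setminus\{e\})=r_M(E)$ for every $e\in E$. The flats of $M\setminus e$ are exactly of two types: (A) flats $F$ of $M$ with $e\notin F$, and (B) sets $F\setminus\{e\}$ where $F$ is a flat of $M$ containing $e$ with $r_M(F\setminus\{e\})=r_M(F)$. For a type (B) flat, the (tPDB) inequality in $M\setminus e$ reduces, after the matching $-1$'s on both sides, to the original (tPDB) inequality for $F$ in $M$, which holds by hypothesis. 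For a type (A) flat the requirement becomes $|E|-|F|\geq t(r_M(E)-r_M(F))+1$; by Claim \ref{BaseCase} this holds automatically whenever $r_M(F)\leq r_M(E)-2$. Therefore the only obstruction is a \emph{tight} rank-$(r_M(E)-1)$ flat (one with $|E|-|F|=t$) that avoids $e$, and it suffices to choose $e\in E_2$ lying in every such tight flat.

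The key structural step is to show that there is at most one tight rank-$(r_M(E)-1)$ flat. Suppose $F_1$ and $F_2$ are two distinct ones; since a flat cannot properly contain another flat of the same rank, $F_1\cap F_2$ is a flat of rank at most $r_M(E)-2$. Applying Claim \ref{BaseCase} to $F_1\cap F_2$ gives $|F_1\cap F_2|\leq |E|-2t-1$, and inclusion-exclusion then yields
\[
|F_1\cup F_2|=|F_1|+|F_2|-|F_1\cap F_2|\geq 2(|E|-t)-(|E|-2t-1)=|E|+1,
\]
contradicting $|F_1\cup F_2|\leq |E|$.

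It remains to guarantee that $E_2$ is large enough to hit the (at most unique) tight flat. Claim \ref{BaseCase} applied to any flat of rank at most $r_M(E)-2$ combined with the hypothesis $|c(E)|=\max+1$ yields $|c(E)|\leq |E|-t$, so if $m_c$ denotes the multiplicity of color $c$ then $\sum_{c\colon m_c\geq 2}(m_c-1)=|E|-|c(E)|\geq t$, forcing $|E_2|=\sum_{c\colon m_c\geq 2}m_c\geq t+1$. If there is no tight rank-$(r_M(E)-1)$ flat, any $e\in E_2$ works. Otherwise, letting $F^*$ denote the unique tight flat, we have $|E\setminus F^*|=t<|E_2|$, so pigeonhole gives $E_2\cap F^*\neq\emptyset$, and we pick $e$ from this intersection. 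I expect the uniqueness step for tight rank-$(r_M(E)-1)$ flats to be the main obstacle, but the slack built into Claim \ref{BaseCase} together with semimodularity handles it cleanly; everything else is direct bookkeeping once the flats of $M\setminus e$ are classified.
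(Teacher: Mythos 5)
Your proposal is correct and follows essentially the same route as the paper: reduce via Claim \ref{BaseCase} to the possible ``bad'' rank-$(r_M(E)-1)$ flats with $|E|-|F|=t$, show there is at most one such flat by intersecting two of them and contradicting Claim \ref{BaseCase}, then count $|E_2|\geq t+1$ to find $e\in E_2$ inside the tight flat. Your explicit classification of the flats of $M\setminus e$ and the no-coloop remark are slightly more detailed than the paper's argument, but the substance is identical.
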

 By Claim \ref{BaseCase}, every flat $F$ of $M$ with $ r_M(F) \leq r_M(E)- 2$ has $|E\setminus F|\geq t(r_M(E)-r_M(F))+1$. It follows that the removal of any element maintains that every flat with $  r_M(F) \leq r_M(E)- 2$ satisfies the (tPDB) condition. As such, the only way $M\setminus e$ violates the (tPDB) condition is if there exists a flat of $M$ with $r_M(F)=r_M(E)-1$ and $|E\setminus F|=t(r_M(E)-r_M(F))$. We claim there can only be one such flat. 

Suppose there are two such ``bad" flats $F_1$ and $F_2$. It is well known that the intersection of two flats is a flat. 
By Inequality \eqref{rank_submodular}, we have
\begin{align*}
    r_M(F_1\cap F_2)&\leq r_M(F_1)+r_M(F_2) -r_M(F_1\cup F_2)\\
    &= (r_M(E)-1) + (r_M(E)-1) -r_M(E)\\
    &=r_M(E)-2.
\end{align*}
Furthermore, 
\begin{eqnarray*}
|E|-|F_1\cap F_2|&\leq &|E|-|F_1|+|E|-|F_2|\\
&=&t(r_M(E)-r_M(F_1))+t(r_M(E)-r_M(F_2))\\
&=&t\cdot 2\\
&\leq & t(r_M(E)-r_M(F_1\cap F_2)).
\end{eqnarray*}
Since $M$ satisfies the (tPDB) condition, and $F_1\cap F_2$ is a flat satisfying $r_M(F_1\cap F_2)\leq r_M(E)-2$, the previous inequality contradicts Claim \ref{BaseCase}. As such, there can only be one ``bad" flat.

Let $F_0$ be the ``bad" flat.
Now since any flat $F$ with $r_M(F)\leq r_M(E)-2$ satisfies $|E|-|F|\geq t(r_M(E)-r_M(F))+1$, we have
\begin{eqnarray*}
|c(E)|&=&\max_{F\colon r_M(F) \leq r_M(E)- 2} \{|F|+t(r_M(E)-r_M(F)-1)\}+1\\
&\leq & |E|-t.
\end{eqnarray*}
Let $C_1$ be the colors on $E$ which are used only once. Since $|c(E)|<|E|$, at least one color must belong to $E_2$. As such, $|C_1|\leq |c(E)|-1\leq |E|-t-1$. It follows that 
\begin{eqnarray*}
|E_2|&=&|E|-|C_1|\\
&\geq & |E|-(|E|-t-1)\\
&=&t+1.
\end{eqnarray*}
Since $|E|-|F_0|=t(r_M(E)-r_M(F_0))=t$, and $|E_2|\geq t+1$, there exists an element $e\in E_2\cap F_0$. This element satisfies the claim. As such, the claim is proven. 
\end{proof}
Let $e\in E_2$ be the element granted by Claim \ref{edgeremoval} and remove $e$ from $M$. Let $M\setminus e=(E\setminus e,\I')$ be the resulting matroid. Notice that since $M$ has $t$ pairwise disjoint bases, deleting an element can only destroy one of them. It follows that $r_{M\setminus e}(E\setminus e)=r_M(E)$. Now by the minimality of $M$,
\begin{eqnarray*}
ar(M\setminus e,t)&\leq& \max_{F\colon r_{M\setminus e}(F) \leq r_{M\setminus e}(E)- 2} \{|F|+t(r_{M\setminus e}(E)-r_{M\setminus e}(F)-1)\}\\
&\leq & \max_{F\colon r_{M}(F) \leq r_{M}(E)- 2} \{|F|+t(r_M(E)-r_M(F)-1)\}\\
&<& |c(E\setminus e)|.
\end{eqnarray*}
As such, $M\setminus e$ contains $t$ pairwise disjoint rainbow bases. Since $r_{M\setminus e}(E\setminus e)=r_{M}(E)$, it follows that these $t$ pairwise disjoint rainbow bases are indeed bases for $M$. This contradicts the choice of $M$ and completes the proof of Theorem \ref{main}. 

\section{Applications to Various Matroids}\label{applications}

In this section, we derive Theorem 2 in \cite{Lu-Meier-Wang-AntiRamsey} using Theorem \ref{main} and apply Theorem \ref{main} to some matroids. 

\subsection{Graphical Matroid}\label{Graphic}

Let $G=(V(G),E(G))$ be a multigraph and let  $M=(E(G),\I)$ be the matroid whose independent sets are the forests of $G$. A basis for $M$ is then a spanning tree of $G$. Let $P$ be a partition of the vertex set of $G$. Adding any edge of $G$ into a partition must decrease the number of parts and thus increase the rank of the non-crossing edges. This justifies that the flats of $M$ correspond to a partition of the vertex set of $G$ and that the edges within the parts correspond exactly to the elements in $F$. The rank $r_M(F)$ of the flat is the sum of the ranks of the parts of $P$ so that 
\begin{eqnarray*}
r_M(F)&=&|V(G)|-|P|\\
&=& r_M(E)-|P|+1.
\end{eqnarray*}
The previous line implies $|P|-2=r_M(E)-r_M(F)-1$. Theorem \ref{main} then reads exactly as

\begin{theorem}\label{mainST2}
For any multigraph $G$, if there is a partition $P_0$ of vertices of $G$ satisfying $|E(G)|-|E(P_0,G)|<t(|P_0|-1)$, then $ar(G,t)=|E(G)|$. Otherwise, 
\begin{equation}
ar(G,t)=\max_{P\colon |P| \geq 3} \{|E(P,G)|+t(|P|-2)\},
\end{equation}
where the maximum is taken among all partitions $P$
(with $|P|\geq 3$) of the vertex set of $G$.
\end{theorem}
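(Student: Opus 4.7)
The plan is to derive Theorem \ref{mainST2} from Theorem \ref{main} applied to the graphical matroid $M = (E(G), \I)$ whose independent sets are the forests of $G$. I may assume $G$ is connected, since otherwise $G$ admits no spanning tree and both sides of the claim reduce to trivial cases. The first task is to prove a bijection between flats of $M$ and \emph{connected partitions} of $V(G)$, i.e.\ partitions $P = \{V_1, \ldots, V_k\}$ for which each induced subgraph $G[V_i]$ is connected. Given a flat $F$, let $P(F)$ consist of the vertex sets of the connected components of $(V(G), F)$; the flat property then forces $F = E(P(F), G)$, since any edge outside $F$ whose two endpoints sit in one component would fail to raise the rank. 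Conversely $E(P, G)$ is a flat for any connected partition $P$.

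Under this bijection, a direct rank calculation gives $|F| = |E(P, G)|$ and
\begin{align*}
r_M(F) &= \sum_{V_i \in P}(|V_i| - 1) = |V(G)| - |P|, \\
r_M(E) &= |V(G)| - 1,
\end{align*}
so that $r_M(E) - r_M(F) - 1 = |P| - 2$ and the condition $r_M(F) \le r_M(E) - 2$ is equivalent to $|P| \ge 3$. The objective $|F| + t\bigl(r_M(E) - r_M(F) - 1\bigr)$ becomes exactly $|E(P, G)| + t(|P| - 2)$. Theorem \ref{main} therefore gives the claimed formula once I show that the maximum over connected partitions equals the maximum over \emph{all} partitions (as appears in Theorem \ref{mainST2}). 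For this I would refine any partition $P$ by splitting each $V_i$ into the vertex sets of the connected components of $G[V_i]$; the resulting connected partition $P'$ satisfies $|E(P', G)| = |E(P, G)|$ (within-part edges are unchanged) and $|P'| \ge |P|$, so the objective does not decrease under refinement.

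The same refinement argument handles the degenerate case: the existence of a bad flat, i.e.\ a flat with $|E| - |F| < t(r_M(E) - r_M(F))$, is equivalent to the existence of a (not necessarily connected) partition $P_0$ with $|E(G)| - |E(P_0, G)| < t(|P_0| - 1)$, because refining a bad partition to a connected one preserves the left side while only potentially enlarging the right side. I do not anticipate a significant obstacle: the proof is essentially a dictionary check, with the only mildly delicate step being the refinement that upgrades arbitrary partitions into connected ones without decreasing the objective.
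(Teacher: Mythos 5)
Your proposal is correct and follows essentially the same route as the paper: apply Theorem \ref{main} to the graphical matroid, identify flats with vertex partitions via $|F|=|E(P,G)|$ and $r_M(F)=|V(G)|-|P|$, so that $r_M(E)-r_M(F)-1=|P|-2$ and the two formulas coincide. You are in fact a bit more careful than the paper's terse dictionary, since you note that flats correspond to partitions with connected parts and supply the refinement argument showing the maximum (and the degenerate condition) over arbitrary partitions agrees with that over connected ones.
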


\noindent
The above is exactly the main result of \cite{Lu-Meier-Wang-AntiRamsey}.

\subsection{Bi-circular Matroids}
Let $G=(V(G),E(G))$ be a connected multigraph with at least one cycle. One can define the \emph{bi-circular matroid} $M_{bc}(G)=(E(G),\I)$ as the matroid whose independent sets are \emph{pseudoforests}, that is a subgraph of $G$ in which every connected component has at most one cycle. We call a subgraph of $G$ \emph{unicyclic} if every connected component has exactly one cycle. We define the anti-Ramsey number of $t$ edge disjoint rainbow unicyclic graphs as the maximum number of colors $ar_{bc}(G,t)$ such that there exists a coloring of the edges of $G$ with $ar_{bc}(G,t)$ colors that contains no $t$ edge disjoint rainbow unicyclic subgraphs.

In the bi-circular matroid, bases are unicyclic subgraphs while flats are subgraphs of $G$ whose connected components are either an induced graph of $G$ or a tree. The rank of a subgraph $F$ is $|V(G)|- \tau(F)$ where $\tau(F)$ be the number of tree components in $F$. 
In particular, $r(M_{bc}(G))=|V(G)|$. Theorem \ref{main} can be rephrased under this context as follows.

\begin{theorem}\label{bicircular}
For any multigraph $G$ at least one cycle and at least two vertices, if there is a subgraph $F_0$ of $G$ satisfying $|E(G)|-|E(F_0)|<t \cdot \tau(F_0)$, then $ar_{bc}(G,t)=|E(G)|$. Otherwise, 
\begin{equation}
ar_{bc}(G,t)=\max_{F\colon \tau(F)\geq 2} \{|E(F)|+t(\tau(F)-1)\},
\end{equation}
where the maximum is taken among all subgraphs $F$
(with $\tau(F)\geq 2$) of $G$.
\end{theorem}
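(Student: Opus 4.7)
The plan is to translate the matroid-theoretic formulas of Theorem~\ref{main} into graph-theoretic ones for $M = M_{bc}(G)$ and then invoke Theorem~\ref{main} directly. Since $G$ is connected and contains a cycle, $G$ admits a spanning unicyclic subgraph, so $r_M(E(G)) = |V(G)|$ and $\tau(E(G)) = 0$.

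First I would verify the rank formula $r_M(F) = |V(G)| - \tau(F)$ for every $F \subseteq E(G)$, viewing $F$ as the edge set of the spanning subgraph $(V(G), F)$. A maximum pseudoforest inside $F$ keeps every edge of each cyclic component of $(V(G),F)$ (contributing $v$ edges on $v$ vertices) and discards exactly one edge from each tree component (contributing $v - 1$ edges on $v$ vertices), where each isolated vertex of $(V(G), F)$ counts as a one-vertex tree component. Summing over components gives $|V(G)| - \tau(F)$, so $r_M(E(G)) - r_M(F) = \tau(F)$ and the condition $r_M(F) \le r_M(E(G)) - 2$ coincides with $\tau(F) \ge 2$.

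The remaining mismatch is that Theorem~\ref{main} optimizes over flats while Theorem~\ref{bicircular} optimizes over arbitrary edge-subsets. To reconcile this I would pass through the closure: for any $F \subseteq E(G)$, $cl(F)$ is a flat satisfying $r_M(cl(F)) = r_M(F)$ (hence $\tau(cl(F)) = \tau(F)$) and $|E(cl(F))| \ge |E(F)|$. Thus replacing $F$ by $cl(F)$ can only increase $|E(F)| + t(\tau(F) - 1)$ and can only decrease $|E(G)| - |E(F)|$, while preserving $\tau(F)$; the extremization over subgraphs and the extremization over flats therefore agree in both the degeneracy hypothesis and in the formula. Substituting $\tau(F)$ for $r_M(E(G)) - r_M(F)$ in Theorem~\ref{main} then yields Theorem~\ref{bicircular} verbatim.

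The only mildly nontrivial step is the rank formula in the presence of isolated vertices and multiple components; once that is pinned down, the flat-versus-subgraph reconciliation via closure and the direct substitution into Theorem~\ref{main} are routine.
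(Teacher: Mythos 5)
Your proposal is correct and takes essentially the same route as the paper: establish $r_{M_{bc}(G)}(F)=|V(G)|-\tau(F)$ (with isolated vertices counted as tree components, a point the paper leaves implicit) and then translate Theorem~\ref{main} directly, your closure argument cleanly handling the flat-versus-subgraph discrepancy that the paper passes over without comment. One small wording correction: in a cyclic component containing more than one independent cycle a maximum pseudoforest cannot keep every edge---it keeps a spanning tree plus one cycle-closing edge---but this still yields the stated count of $v$ edges on $v$ vertices, so your rank formula and the rest of the argument stand.
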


Now we consider the complete graph $K_n$. The flats of $K_n$ are subgraphs whose components are either a clique or a tree. 
To apply Theorem \ref{bicircular}, we need to maximize $|F|+t\cdot \tau(F)$. Among all $F$ with fixed $\tau(F)=x$, 
$|F|+t\cdot \tau(F)$ is maximized when $F=K_{n-x}$ plus $x$ isolated vertices.
Consider the function $f(x)=\binom{n-x}{2}+tx$. It is clear that the degenerate case of Theorem \ref{main} now reads that if $\binom{n}{2}< \max_{0\leq x \leq n}\{f(x)\}$, then $ar_{bc}(K_n,t)=\binom{n}{2}$. Since $f(x)$ is concave up, the maximum occurs either when $x=0$ or $x=n$. Notice that $f(0)=\binom{n}{2}$. The degenerate condition would then read $\binom{n}{2}<\binom{n}{2}$ which is never satisfied. Since $f(n)=tn$, after some simplification, the degenerate condition reads that if $n<2t+1$, then $ar_{bc}(K_n,t)=\binom{n}{2}$. 

Considering the non-degenerate case of Theorem \ref{bicircular} we have

\begin{eqnarray*}
ar_{bc}(K_n,t)&=&\max_{F\colon \tau(F)\geq 2} \{|F|+t(\tau(F)-1)\}\\
&=&\max_{2\leq x\leq n}\{f(x)-t\}\\
&=&\max\{f(2)-t,f(n)-t\}.
\end{eqnarray*}

The last line above holds by the concavity of $f(x)$. This gives the following theorem:
\begin{theorem}\label{rainbowpseudoforests}
For all positive integers $n\geq 3$ and $t\geq 1$. We have
$$ar_{bc}(K_n,t) = \begin{cases} 
\binom{n}{2} &   \textrm{ for } n\leq 2t,\\
t(n-1)   & \textrm{ for } 2t+1\leq n \leq 2t+3,\\
\binom{n-2}{2}+t   & \textrm{ for } n \geq 2t+4.
 \end{cases}$$
\end{theorem}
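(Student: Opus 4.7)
The plan is to finish the reduction that has already been carried out in the paragraphs immediately preceding the theorem. Those paragraphs apply Theorem~\ref{bicircular} to $K_n$, describe the flats of $M_{bc}(K_n)$, and reduce the optimization to the function $f(x)=\binom{n-x}{2}+tx$. What remains is an arithmetic comparison and a small sanity check.

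First, I would dispatch the degenerate case. Since $f$ is convex (``concave up'') with $f(0)=\binom{n}{2}$, its maximum on $[0,n]$ is $f(n)=tn$, and the degenerate condition $\binom{n}{2}<\max_x f(x)$ becomes $\binom{n}{2}<tn$, equivalent to $n\le 2t$. This gives $ar_{bc}(K_n,t)=\binom{n}{2}$ in the first range.

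For $n\ge 2t+1$, convexity of $f$ reduces the non-degenerate formula from Theorem~\ref{bicircular} to
\[
ar_{bc}(K_n,t)=\max\left\{\binom{n-2}{2}+t,\; t(n-1)\right\}.
\]
I would then factor the difference
\[
\left(\binom{n-2}{2}+t\right)-t(n-1)=\frac{(n-2)(n-3-2t)}{2},
\]
whose sign, for $n\ge 2$, matches that of $n-3-2t$. Hence $t(n-1)$ strictly wins for $n\le 2t+2$, the two values coincide at $n=2t+3$ (both equal $2t(t+1)$), and $\binom{n-2}{2}+t$ strictly wins for $n\ge 2t+4$. Merging produces the two non-degenerate cases in the theorem.

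The one piece of bookkeeping I would not skip, and the closest thing to an obstacle, is a sanity check at $\tau(F)\in\{n-1,n-2\}$: for those $\tau$-values the extremal flats use $K_2$ tree components rather than a $K_{n-x}$ clique component, so $f(x)$ understates the true value of $|F|+t\tau(F)$ by one. A direct calculation shows that the corresponding genuine maxima $1+t(n-2)$ and $2+t(n-3)$ are nevertheless dominated by $t(n-1)=f(n)-t$ for every $t\ge 1$, confirming that the convexity-based reduction to the endpoints $x\in\{2,n\}$ captures the true optimum; the theorem then stands as written.
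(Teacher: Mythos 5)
Your proposal is correct and takes essentially the same route as the paper: reduce to maximizing $f(x)=\binom{n-x}{2}+tx$ over the flats described before the theorem, use convexity of $f$ to restrict to the endpoints, and compare $f(2)-t=\binom{n-2}{2}+t$ with $f(n)-t=t(n-1)$, which yields exactly the three ranges stated. Your additional check at $\tau(F)\in\{n-1,n-2\}$ --- where the extremal flats consist of tree components with edges, so $f$ undercounts $|E(F)|+t\tau(F)$ by one --- is a boundary detail the paper's derivation silently skips, and your verification that the corresponding values $1+t(n-2)$ and $2+t(n-3)$ are still dominated by $t(n-1)$ for $t\geq 1$ correctly confirms the reduction.
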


\subsection{Cographic Matroids}
Given a connected multigraph $G$, a \emph{cut} is a collection of edges $C$ so that deleting $C$ disconnects $G$. A \emph{bond} is a minimal cut of $G$. The anti-Ramsey number of $t$ edge disjoint rainbow bonds $ar_b(G,t)$ asks for the maximal number of colors one can put on the edges of $G$ with no $t$ edge disjoint rainbow bonds of size $|E(G)|-|V(G)|+1$. 

A \emph{cographic matroid}, denoted by $M^*(G)$, is a matroid with the ground set $E(G)$ such that the circuits are the bonds of $G$. All bases have the form $E\setminus T$, where $T$ is a spanning tree of $G$.
In other words, the cographic matroids are the dual of the graphic matroids. In particular, when $G$ is a planar graph, the cographic matroid $M^*(G)$ is the same as the graphical matroid $M(G^*)$. Here $G^*$ is the dual graph of the planar graph $G$.  

The rank of $M^*(G)$ is $|E(G)|-|V(G)|+1$. The rank of any subset $S\subseteq M^*$ is 
\begin{eqnarray*}
r_{M^*}(S)&=&r_M(E\setminus S)+|S|-r_M(E)\\
&=&|V(G)|-q(E\setminus S)+|S|-|V(G)|+1\\
&=&|S|-q(E\setminus S)+1,
\end{eqnarray*}
where $q(S)$ is the number of components in $S$. A subset $F\subseteq E$ is a flat of $M^*$ if and only if $E\setminus F$ is a union of cycles. Theorem \ref{main} then reads that if there is a flat $F$ with $|E|-|F|<(r_{M^*}(E)-r_{M^*}(F))$, then $ar_b(G,t)=|E|$ and otherwise,
$$ar_b(G,t)=\max_{F\subseteq E \colon r_{M^*}(F)\leq r_{M^*}(E)-2}\{|F|-t(r_{M^*}(E)-r_{M^*}(F)-1)\}.$$

\subsection{Signed Graphs}

A \emph{signed graph} $ G=(V( G),E( G))$ is a multigraph were each edge is assigned a positive or negative sign. A subgraph of $ G$ is called positive, or negative, if the product of the signs on its edges is positive, or negative respectively. Suppose $G$ is connected and contains at least one negative cycle. A subset of edges is called \emph{balanced} if each cycle is positive. We call a subgraph of $G$ in which each component has exactly one negative cycle a \emph{signed unicyclic subgraph}. We define the anti-Ramsey number of $t$ edge disjoint rainbow signed unicyclic subgraphs, denoted $ar_{\pm}( G,t)$, as the maximum number of colors one can put on the edges of $ G$ such that $ G$ contains no $t$ edge-disjoint rainbow signed unicyclic subgraphs. 

One can define the signed graphical matroid $M=(E,\I)$ where $E$ are the edges of $ G$ and a set of edges are independent if and only if each component contains no cycles or exactly one cycle which is negative. For a subgraph $H$ of $G$, let $b( H)$ be the number of balanced components of $ H$. The rank of a subset $S\subseteq E$ satsifies $r_M(S)=|V(G)|-b(S)$. In particular, $r_M(E)=|V(G)|.$

Theorem \ref{main} then reads

\begin{theorem}\label{generalsigned}
Let $ G = (V( G),E( G))$ be a connected signed graph with at least 2 vertices, at least one negative cycle, and $t\geq 1$ a positive integer. If there exists a subgraph $F_0$ with $|E(G)|-|E(F_0)|<t\cdot b( F_0)$, then $ar_{\pm}(G,t)=|E(G)|$. Otherwise,
$$ar_{\pm}(G,t)=\max_{F \colon b(F)\geq 2} \{|E(F)|+t(b(F)-1)\}$$
where the maximum is taken over all subgraphs $F$ with $b(F)\geq 2$.
\end{theorem}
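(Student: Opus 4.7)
The plan is to apply Theorem \ref{main} directly to the signed graphic matroid $M=(E(G),\I)$ and translate the conclusions using the rank formula $r_M(S)=|V(G)|-b(S)$ recorded above. First I would note that since $G$ is connected and contains at least one negative cycle, the subgraph $(V(G),E(G))$ has no balanced component, so $b(E(G))=0$ and hence $r_M(E(G))=|V(G)|$. For any subgraph $F$ it then follows that $r_M(E(G))-r_M(F)=b(F)$. Consequently the rank constraint $r_M(F)\leq r_M(E(G))-2$ in Theorem \ref{main} becomes $b(F)\geq 2$, the summand $|F|+t(r_M(E(G))-r_M(F)-1)$ becomes $|E(F)|+t(b(F)-1)$, and the degenerate inequality $|E|-|F_0|<t(r_M(E)-r_M(F_0))$ becomes $|E(G)|-|E(F_0)|<t\cdot b(F_0)$.

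The only remaining discrepancy is that Theorem \ref{main} quantifies over flats of $M$, while Theorem \ref{generalsigned} quantifies over arbitrary subgraphs of $G$. To close this gap I would use the closure operator of $M$: for any edge set $S\subseteq E(G)$, the closure $cl(S)$ is a flat satisfying $r_M(cl(S))=r_M(S)$, and in particular $b(cl(S))=b(S)$, together with $|cl(S)|\geq |S|$. Therefore
\[
  |cl(S)|+t(b(cl(S))-1)\;\geq\;|S|+t(b(S)-1),
\]
so the maximum taken in Theorem \ref{main} over flats with $r_M(F)\leq r_M(E)-2$ equals the maximum in Theorem \ref{generalsigned} taken over arbitrary subgraphs with $b(F)\geq 2$ (one inequality is trivial since flats are subgraphs, the other follows from the displayed bound). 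An identical closure argument shows that any subgraph $F_0$ violating the degenerate inequality may be replaced by the flat $cl(F_0)$, and conversely, so the degenerate clauses of the two theorems are also equivalent.

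The main obstacle is essentially nothing beyond this bookkeeping; the substance has been absorbed by Theorem \ref{main}. The one place that requires care is a matter of convention: when evaluating $b(F)$ for a subgraph $F$, one must treat $F$ as having vertex set $V(G)$ so that isolated vertices count as trivial balanced components, which is exactly what makes the identity $r_M(F)=|V(G)|-b(F)$ valid on all subsets of $E(G)$. With that convention consistently applied, Theorem \ref{main} specialized to the signed graphic matroid yields Theorem \ref{generalsigned} verbatim.
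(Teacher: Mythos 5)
Your proposal is correct and takes essentially the same route as the paper: specialize Theorem \ref{main} to the signed graphic matroid via $r_M(S)=|V(G)|-b(S)$, noting $b(E(G))=0$ (so $r_M(E)=|V(G)|\geq 2$ and Theorem \ref{main} applies). Your explicit closure argument passing between flats and arbitrary subgraphs, and the convention that $b(F)$ counts isolated vertices of the spanning subgraph $(V(G),F)$, is precisely the bookkeeping the paper leaves implicit.
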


We consider the special case where $ G = \pm K_n$ the complete signed graph on $n$ vertices. I.e., $\pm K_n$ is the graph where each possible +1 or -1 edge exists between every pair of vertices and there are no loops.

We turn to maximizing $|E(F)|+t\cdot b(F)$. It is known that a signed graph is balanced if and only if it partitions into a pair of subgraphs, one of which is allowed to be empty, all edges within the parts are positive, and have negative edges connecting the parts \cite{Cartwright-Harary1979}. It follows that if $F$ is a flat of rank $n-b(F)$, each of the $b(F)$ balanced components partition into two complete positive subgraphs and all negative edges exist between. It follows that if $B$ is a balanced component of $F$, $|E(B)|=\binom{|B|}{2}$. The remaining parts are induced subgraphs of $\pm K_n$ which can be combined into one large induced subgraph of $\pm K_n$. Let $\{B_1,B_2,\cdots, B_{b(F)}\}$ be the balanced components of $F$. It follows that 
\begin{eqnarray*}
|E(F)|&=& \sum_{i=1}^{b(F)} \binom{|B_i|}{2}+2\cdot \binom{n-\sum_{i=1}^{b(F)} |B_i|}{2}.
\end{eqnarray*}
The previous is maximized when all the $B_1,B_2,\cdots, B_{b(F)}$ are a single point. We define 
$$f(s)= 2\cdot \binom{n-s}{2}+t s.$$
By the concavity of $f(s)$, the maximum is attained at either $s=0$ or $s=n$. When $s=0$, $f(0)=2\cdot \binom{n}{2}$ so that the degenerate case reads $2\cdot \binom{n}{2}<2\cdot \binom{n}{2}$ which never happens. If $s=n$, we have the condition that $2\binom{n}{2}<tn$ or $n<t+1$. Considering the non-degenerate case of Theorem \ref{main}, we have
\begin{eqnarray*}
ar_{\pm}(\pm K_n,t)&=&\max_{F : r_M(F)\leq r_M(E)-2}\{|F|+t(r_M(E)-r_M(F)-1)\} \\
&=&\max_{2\leq s \leq n}\{f(s)-t\}\\
&=&\max\{f(2)-t,f(n)-t\}.
\end{eqnarray*}

We have the following:

\begin{theorem}\label{rainbowsigned}
For all positive integers $n\geq 3$ and $t\geq 1$. We have
$$ar_{\pm }(\pm K_n,t) = \begin{cases} 
2\cdot \binom{n}{2} &   \textrm{ for } n\leq t,\\
t(n-1)   & \textrm{ for } t+1\leq n \leq t+3,\\
2\cdot \binom{n-2}{2}+t   & \textrm{ for } n \geq t+4.
 \end{cases}$$
\end{theorem}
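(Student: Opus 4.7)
The plan is to apply Theorem \ref{generalsigned} to $G = \pm K_n$, reduce the maximization over flats to a one-variable maximization of a convex quadratic $f(s) := 2\binom{n-s}{2} + ts$, and read off the three cases. First I would describe the flats of $M(\pm K_n)$: by Harary's balance criterion, each balanced component $B_i$ of a flat $F$ has an underlying bipartition with positive edges within parts and negative edges across, so it contributes exactly $\binom{|B_i|}{2}$ edges to $F$. The unbalanced components, being induced signed subgraphs of $\pm K_n$, can be combined without loss into a single induced $\pm K_m$ on $m = n - \sum|B_i|$ vertices contributing $2\binom{m}{2}$ edges. Hence $|E(F)| \leq \sum_{i=1}^{b(F)} \binom{|B_i|}{2} + 2\binom{n - \sum|B_i|}{2}$, and I would show that for fixed $s = b(F)$ this right-hand side is maximized when every balanced component is a singleton, giving the upper bound $|E(F)| + t\cdot b(F) \leq f(s)$ with equality achieved by the singleton configuration.

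Second, I would dispose of the degenerate alternative of Theorem \ref{generalsigned}. Since $f$ is a convex quadratic with $f(0) = 2\binom{n}{2} = |E(\pm K_n)|$, any flat violating $|E(\pm K_n)| - |E(F_0)| \geq t\cdot b(F_0)$ must correspond to the endpoint $s = n$, namely the empty flat with value $f(n) = tn$. The condition $tn > n(n-1)$ is equivalent to $n \leq t$, giving $ar_{\pm}(\pm K_n, t) = 2\binom{n}{2}$ in that range. For $n \geq t+1$, the non-degenerate formula combined with convexity of $f$ gives
\[
ar_{\pm}(\pm K_n, t) \;=\; \max_{2 \leq s \leq n}\bigl(f(s) - t\bigr) \;=\; \max\{f(2) - t,\; f(n) - t\}.
\]
Explicit evaluation yields $f(2) - t = 2\binom{n-2}{2} + t$ and $f(n) - t = t(n-1)$, and comparing them reduces to the inequality $(n-2)(n-3) \geq t(n-2)$, i.e., $n \geq t+3$. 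Both expressions agree at the crossover $n = t+3$ with common value $t^2 + 2t$, producing the three cases stated in the theorem.

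The main obstacle is the structural reduction in the first step: proving that singleton balanced components actually maximize $|E(F)|$ for the relevant values of $s = b(F)$. An exchange argument would show that, starting from any flat with a balanced component of size at least two, moving a vertex out into the unbalanced bulk can only increase $|E(F)|$ in the range of $s$ that governs the overall maximum, so the singleton configuration dominates and the clean quadratic $f(s)$ is the correct upper envelope. Once this reduction is secured, the remaining calculation is strictly parallel to the analogous derivation for the bicircular matroid in Theorem \ref{rainbowpseudoforests}.
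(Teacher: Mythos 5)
Your plan follows the paper's derivation exactly (Harary's balance criterion, the reduction to $f(s)=2\binom{n-s}{2}+ts$, convexity, endpoint comparison), and your degenerate-case and endpoint computations agree with the paper's. However, the step you correctly single out as the crux --- that for fixed $s=b(F)$ the quantity $\sum_i\binom{|B_i|}{2}+2\binom{n-\sum_i|B_i|}{2}$ is maximized by making every balanced component a singleton --- is not true in general, and the exchange argument you propose does not close it. Moving a vertex from a balanced component of size $b$ into an unbalanced bulk of size $m$ changes the edge count by $2m-(b-1)$, which is negative whenever the bulk is empty or very small; indeed the objective is convex in the component sizes, so its maximum sits at an extreme configuration, which is either all singletons (value $2\binom{n-s}{2}$) or one balanced component on all $n-s+1$ remaining vertices with empty bulk (value $\binom{n-s+1}{2}$). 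The second configuration wins precisely when $n-s+1\le 3$, so the clean envelope $f(s)$ is not always the correct upper bound.

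This is not merely cosmetic: at $(n,t)=(4,1)$ the flat consisting of a balanced (all-positive) triangle plus an isolated vertex has $3$ edges and $b(F)=2$, so Theorem \ref{generalsigned} gives $ar_{\pm}(\pm K_4,1)\ge 3+1\cdot(2-1)=4$, exceeding the claimed $t(n-1)=3$; concretely, coloring that triangle rainbow and all other $9$ edges a single fourth color leaves no rainbow basis, since a basis can use at most two edges of the positive triangle (it is a positive cycle, hence dependent) and would need two same-colored edges outside it. For $n\ge 5$ one has $2\binom{n-2}{2}\ge\binom{n-1}{2}$, and since both envelopes $2\binom{n-s}{2}+ts$ and $\binom{n-s+1}{2}+ts$ are convex with the same value at $s=n$, the stated formula is recovered; the case $n=3$ also survives by direct check. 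Be aware that the paper itself asserts the singleton maximization without proof, so you have faithfully reproduced its argument including this weak point --- but as written your fix (the exchange argument ``in the range of $s$ that governs the overall maximum'') fails exactly where the extra extreme configuration matters, and a complete proof must either handle both extreme configurations via convexity (restricting to $n\ge 5$ and treating $n=3,4$ separately) or amend the statement at $(n,t)=(4,1)$.
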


\subsection{Transversal Matroids}
Transversal matroids were first studied by Edmonds and Fulkson \cite{Edmonds-Fulkerson1965} in 1965. Given a set family $\mathcal{A}=\{A_j\colon  j\in J$\}, let $E=\cup_{j\in J}A_j$ be the ground set.
A \emph{partial transversal} of $\mathcal{A}$ is a subset $T$ of $E$ such that
there is an injective map $\phi\colon T\to J$ with
$t\in A_{\phi(t)}$ for all $t\in T$. When $|T|=|J|$, partial transversals are called \emph{transversals}.

The transversal matroid, denoted by
$M[\mathcal{A}]$ is a matroid consisting of the ground set $E$ and all partial transversals as independent sets. Assume transversals 
of $\mathcal{A}$ exist, then bases are just transversals. 

From the set family $\mathcal{A}=\{A_j\colon  j\in J\}$, one can construct a bipartite incidence graph $G$ with bipartite vertex sets $J\cup E$, where $(j,x)$ is an edge of $G$ if and only if $x\in A_j$. The partial transverals are simply the second ends of matchings in $G$. For any $S\subseteq E$, the rank $r_M(S)$ is simply the maximum size of matchings in the induced subgraph $G[J\cup S]$.
Let $ar_{tr}(\mathcal{A}, t)$ be the maximum number $m$ such that there exists a coloring of $E$ using $m$ colors so that no $t$ disjoint rainbow transversals exist. Then, Theorem \ref{main} reads that if there exists a flat $F$ with $|E|-|F|< t(r_M(E)-r_M(F))$ then $ar_{tr}(\mathcal{A},t)=|E|$; else $ar_{tr}(\mathcal{A},t)=\max_{F\colon r(F)\leq r(E)-2} \{|F|+t(r(E)-r(F)-1)\}.$





For any positive integer $n,k$, the \emph{uniform matroid} $U_{k,n}$ is a special transversal matroid with $E=[n]$ and $\mathcal{A}=(E,E,\cdots, E)$ repeated $k$ times. Equivalently, the bases of $U_{k,}$ are all subsets of size $k$. 
The flats of $U_{k,n}$ are any subset of size at most $k-1$ and $[n]$. Let $F$ be a flat with size $s$. Theorem \ref{main} yields that if $n-s<t(k-s)$, then $ar_{tr}(U_{k,n},t)=n$. Otherwise we have,
\begin{eqnarray*}
ar_{tr}(U_{k,n},t)&=&\max_{F\colon r_{U_{k,n}}(F) \leq r_{U_{k,n}}(E)- 2} \{|F|+t(r_{U_{k,n}}(E)-r_{U_{k,n}}(F)-1)\}\\
&=&s+ t\cdot k -t\cdot s -t \\
&=& t( k-1) +(1-t)s
\end{eqnarray*}
The maximum above is achieved when $s=0$. This gives the following theorem:

\begin{theorem}\label{UniformAR}
Let $E=[n]$ and fix $1\leq k\leq n$ a natural number. If $n<kt$, then $ar_{tr}(U_{k,n},t)=n$ Otherwise,
$$ar_{tr}(U_{k,n},t)=t(k-1).$$
\end{theorem}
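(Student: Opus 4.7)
The plan is to apply Theorem \ref{main} directly after describing the flat structure of the uniform matroid $U_{k,n}$. The key structural observation is that in $U_{k,n}$ any subset $S$ with $|S|\leq k-1$ is itself independent with $r_M(S)=|S|$, and adjoining any element outside $S$ strictly increases the rank; hence $cl(S)=S$, so $S$ is a flat. Any subset of size at least $k$ has rank $k$ and closure equal to $E$. Thus the flats of $U_{k,n}$ are exactly the subsets of size at most $k-1$, together with $E$ itself.

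Next I would analyze the degenerate condition in Theorem \ref{main}. For a flat $F$ of size $s\in\{0,1,\ldots,k-1\}$, the inequality $|E|-|F|<t(r_M(E)-r_M(F))$ becomes $n-s<t(k-s)$, equivalently $n<tk-(t-1)s$. Since $t\geq 1$, the right-hand side is weakly decreasing in $s$, so its maximum over the allowed range is attained at $s=0$. Consequently some flat triggers the degenerate condition if and only if $n<tk$, and in that case Theorem \ref{main} yields $ar_{tr}(U_{k,n},t)=n$.

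In the non-degenerate case $n\geq tk$ (and with $k\geq 2$ so that Theorem \ref{main} applies), the maximum runs over flats with $r_M(F)\leq k-2$, i.e., subsets of size $s\in\{0,1,\ldots,k-2\}$. The objective becomes
$$|F|+t(r_M(E)-r_M(F)-1)=s+t(k-1-s)=t(k-1)+(1-t)s,$$
a linear function of $s$ with slope $1-t\leq 0$, hence maximized at $s=0$, giving $t(k-1)$.

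No genuine obstacle arises here: the entire argument is a transparent specialization of the main theorem, and the optimization is trivial once the flats are correctly identified. The only delicate point is pinning down the flat structure, but this follows immediately from the definition of $U_{k,n}$ combined with the fact that every $k$-subset of $E$ is a basis. The edge case $k=1$ is handled separately by an elementary pigeonhole argument, since then every basis is a singleton and the objective $t(k-1)=0$ reflects that no coloring avoids $t$ disjoint rainbow singletons whenever $n\geq t$.
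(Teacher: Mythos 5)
Your proof is correct and follows essentially the same route as the paper: identify the flats of $U_{k,n}$ as the subsets of size at most $k-1$ together with $E$ itself, specialize Theorem \ref{main}, note the degenerate condition is triggered exactly when $n<kt$ (most easily at $s=0$), and maximize the linear objective $t(k-1)+(1-t)s$ at $s=0$. Your explicit monotonicity check for the degenerate case and your separate remark on the edge case $k=1$ (where Theorem \ref{main} does not apply because the rank is $1$) are slightly more careful than the paper's derivation, which applies Theorem \ref{main} without comment.
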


\subsection{Linear Matroids}
Let $F$ be a field, $d\geq 2$, and $E$ a finite subset of $F^d$. A linear matroid $M=(E,\I)$ can be defined where $\I$ consists of all linearly independent subsets of $E$. We assume $E$ spans $F^d$. Bases are just bases of the vector space. Flats are just intersections of linear subspaces with $E$. The rank of a subset $S\subseteq E$ is just the dimension of the linear space spanned by $S$.

The anti-Ramsey number $ar_\ell(E,t)$ of $t$ disjoint rainbow bases is the maximum integer $m$ such that there exists a coloring of $E$ with $m$ colors that contains no $t$ disjoint rainbow bases. Theorem \ref{main} reads,

\begin{theorem}\label{inear}
Given a finite set $E$ of $F^d$ with $d\geq 2$, suppose $E$ spans $F^d$. If there is a subspace $S_0$ of $F^d$ satisfying $|E|-|E\cap S_0|<t \cdot (d-dim(S_0))$, then $ar_\ell(E,t)=|E|$. Otherwise, 
\begin{equation}
ar_\ell(E,t)=\max_{S\colon dim(S)\leq d-2} \{|S\cap E|+t(d-dim(S)-1)\},
\end{equation}
where the maximum is taken among all linear subspaces $S$ of dimension at most $d-2$.
\end{theorem}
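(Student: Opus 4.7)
The plan is to apply Theorem \ref{main} directly, by exhibiting a correspondence between the flats of the linear matroid $M = (E, \I)$ and linear subspaces of $F^d$ intersected with $E$, and by identifying matroid rank with linear dimension. First I would verify the rank dictionary: for any $S \subseteq E$, $r_M(S) = \dim \mathrm{span}(S)$, and in particular $r_M(E) = d$ since $E$ spans $F^d$.

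Next I would characterize the flats of $M$. Given any subspace $S \subseteq F^d$, the set $F = E \cap S$ is a flat, because any $x \in E \setminus F$ lies outside $S$ and hence outside $\mathrm{span}(F) \subseteq S$, so $r_M(F \cup \{x\}) = r_M(F) + 1$. Conversely, every flat $F$ of $M$ satisfies $F = E \cap \mathrm{span}(F)$, since $\mathrm{span}(F)$ cannot contain an element of $E \setminus F$ (otherwise rank would fail to increase upon adjoining it). Thus the flats of $M$ are exactly the sets of the form $E \cap S$ for subspaces $S$ of $F^d$; a flat $F$ corresponds uniquely to the subspace $\mathrm{span}(F)$, and $r_M(F) = \dim \mathrm{span}(F)$.

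With this dictionary, both conditions of Theorem \ref{main} translate directly. For the degenerate condition, if a flat $F$ of $M$ satisfies $|E| - |F| < t(d - r_M(F))$ then $S_0 = \mathrm{span}(F)$ witnesses the linear degenerate condition; conversely, if a subspace $S_0$ satisfies $|E| - |E \cap S_0| < t(d - \dim S_0)$, then the flat $F_0 = E \cap S_0$ has $r_M(F_0) \leq \dim S_0$, whence $|E| - |F_0| < t(d - \dim S_0) \leq t(d - r_M(F_0))$. For the max formula, each flat $F$ of $M$ with $r_M(F) \leq d - 2$ yields the subspace $S = \mathrm{span}(F)$ with $\dim S = r_M(F)$ giving the same value on the linear side; conversely, each subspace $S$ with $\dim S \leq d - 2$ yields the flat $F = E \cap S$ with $r_M(F) \leq \dim S \leq d - 2$, and then $|E \cap S| + t(d - \dim S - 1) \leq |F| + t(d - r_M(F) - 1)$. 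Hence the two maxima coincide and the result follows. The argument is essentially bookkeeping; I do not foresee any substantive obstacle, only the mild care required to verify that optimizing over subspaces rather than over flats still yields the same value, which is handled by always selecting $S = \mathrm{span}(F)$.
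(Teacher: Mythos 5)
Your proposal is correct and follows the same route as the paper, which treats Theorem \ref{inear} as an immediate translation of Theorem \ref{main} via the dictionary ``flats are intersections of subspaces with $E$, rank is dimension of the span.'' You simply spell out the details the paper leaves implicit, including the mild but genuine point that a subspace $S$ with $\dim S > r_M(E\cap S)$ never beats the corresponding flat in either the degenerate condition or the maximum, which you handle correctly by passing to $\mathrm{span}(F)$.
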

A specific example can be given when $E$ is a cube containing the zero vector. Namely, 
 let $F$ be a field and $T$ a finite subset of $F$ with $|T|\geq 2$ and $0\in T$. We consider the set $E=T^d\subseteq F^d$.  Notice that for any subspace $S$ of dimension $s$, $|S\cap T^d|\leq |T|^s$. The equality is achieved when $S$ is defined by setting
 $d-s$ coordinates equal to $0$. Let $f(x)= |T|^x+t(d-x)$. Observe that $f(x)$ is concave upward. The maximum of $f(x)$ on any interval is reached at one of the ends. Since $f(d)=|T|^d$, the degenerate case is given by
 $|T|^d<f(0)=1+td$. When $|T|^d\geq 1+td$, we have $ar_\ell(T^d,t)=\max\{f(0),f(d-2)\}-t$.
 This implies the following result.

\begin{theorem}\label{FVSAR2}
For any finite subset $T$ of any field $F$ with $0\in T$ and $|T|\geq 2$, and for  any integer $d\geq 2$, we have
 $$ar_\ell(T^d,t) = \begin{cases} 
|T|^d &   \textrm{ if }  t>\frac{|T|^d-1}{d},\\
1+t(d-1) & \textrm{ if } \frac{|T|^{d-2}-1}{d-2} \leq t\leq \frac{|T|^d-1}{d},\\
|T|^{d-2}+t  & \textrm{ if } t\leq \frac{|T|^{d-2}-1}{d-2}.
 \end{cases}$$

\end{theorem}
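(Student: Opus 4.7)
The plan is to apply Theorem \ref{main} directly to the linear matroid $M = (T^d, \I)$ and reduce everything to a short convex-optimization exercise. The key geometric input, already recorded in the paragraph preceding the theorem, is that every flat of $M$ has the form $F = S \cap T^d$ for some linear subspace $S \subseteq F^d$ with $\dim S = r_M(F)$, and that for each integer $s$ the maximum of $|F|$ over flats of rank $s$ is exactly $|T|^s$, attained at the coordinate subspaces that zero out $d-s$ coordinates.

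First I would handle the degenerate case of Theorem \ref{main}. For a flat $F$ of rank $s$, the inequality $|E| - |F| < t(d-s)$ is tightest when $|F| = |T|^s$, so the degenerate case holds iff $t > g(s) := \frac{|T|^d - |T|^s}{d-s}$ for some $s \in \{0, 1, \ldots, d-1\}$. The quantity $g(s)$ is the slope of the secant of the convex function $x \mapsto |T|^x$ joining $(s, |T|^s)$ to $(d, |T|^d)$; convexity of $|T|^x$ (which holds since $|T| \geq 2$) implies that such secant slopes are non-decreasing in their left endpoint, so $g$ is increasing and $\min_s g(s) = g(0) = \frac{|T|^d - 1}{d}$. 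This yields the first case of the theorem: $ar_\ell(T^d, t) = |T|^d$ whenever $t > \frac{|T|^d - 1}{d}$.

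For the non-degenerate regime $t \leq \frac{|T|^d - 1}{d}$, Theorem \ref{main} gives $ar_\ell(T^d, t) = \max_{0 \leq s \leq d-2}\{|T|^s + t(d-s-1)\} = \max\{f(0), f(d-2)\} - t$, where $f(s) = |T|^s + t(d-s)$ is convex, so its maximum on $[0, d-2]$ is attained at an endpoint as noted in the preamble. Comparing, $f(d-2) - t = |T|^{d-2} + t$ is at least $f(0) - t = 1 + t(d-1)$ precisely when $t(d-2) \leq |T|^{d-2} - 1$, i.e., $t \leq \frac{|T|^{d-2}-1}{d-2}$, which splits the remaining two cases exactly as stated. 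Convexity of $|T|^x$ additionally gives $\frac{|T|^{d-2}-1}{d-2} \leq \frac{|T|^d-1}{d}$, so the three intervals together partition the range of $t$ and the case distinction is consistent. I anticipate no substantive obstacle beyond these standard convexity observations; the main theorem does all the heavy lifting, and only the secant-slope monotonicity of $|T|^x$ is needed to identify both the degenerate threshold at $s=0$ and the correct ordering of the two thresholds.
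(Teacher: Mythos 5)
Your proposal is correct and follows essentially the same route as the paper: apply Theorem \ref{main} (via Theorem \ref{inear}) together with the bound $|S\cap T^d|\leq |T|^s$, with equality at coordinate subspaces, and use convexity of $f(x)=|T|^x+t(d-x)$ to reduce both the degenerate condition and the maximum over $0\leq s\leq d-2$ to endpoint comparisons. Your secant-slope formulation of the degenerate threshold is just a repackaging of the paper's observation that $f(d)=|T|^d$ forces the condition $|T|^d<f(0)=1+td$, so the two arguments coincide.
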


\end{document}